\def\R{\mathbb{R}}
\begin{document}

\theoremstyle{theorem}
\newtheorem{Lemma}{Lemma}[section]
\newtheorem{Proposition}{Proposition}[section]
\newtheorem{Corollary}{Corollary}[section]
\newtheorem{Theorem}{Theorem}[section]
\newtheorem{Condition}{Condition}[section]
\newtheorem{Integrator}{Integrator}

\theoremstyle{definition}
\newtheorem{Definition}{Definition}[section]
\newtheorem{Remark}{Remark}[section]
\newtheorem{Example}{Example}[section]

\newcommand{\todo}[1]{\vspace{5 mm}\par \noindent
\marginpar{\textsc{ToDo}}
\framebox{\begin{minipage}[c]{0.95 \textwidth}
\tt #1 \end{minipage}}\vspace{5 mm}\par}

\title{Variational and linearly-implicit integrators, with applications}

\author{Molei Tao and Houman Owhadi}

\date{August 25, 2014}

\maketitle

\begin{abstract}
We show that symplectic and linearly-implicit integrators proposed by Zhang and Skeel \cite{ZhSk97} are variational linearizations of Newmark methods. When used in conjunction with penalty methods (i.e., methods that replace constraints by stiff potentials), these integrators permit coarse time-stepping of holonomically constrained mechanical systems and bypass the resolution of nonlinear systems. Although penalty methods are widely employed, an explicit link to Lagrange multiplier approaches appears to be lacking; such a link is now provided (in the context of two-scale flow convergence \cite{FLAVOR10}). The variational formulation also allows efficient simulations of mechanical systems on Lie groups.
\end{abstract}

\section{Introduction and main results}
\label{SectionIntroduction}
\paragraph{Integrators:}
Symplectic integrators are popular for simulating mechanical systems due to their structure preserving properties (e.g., \cite{Hairer06}). Implicit methods, on the other hand, allow accurate coarse time-stepping of a class of stiff or multiscale problems (e.g., \cite{LiAbE:08, FiJi10}). It is also a classical treatment to linearize implicit methods so that expensive nonlinear solves can be avoided (e.g., \cite{BeWa76}). Although linearizations of most implicit symplectic methods are not symplectic, Zhang and Skeel found a family of symplectic and linearly-implicit integrators \cite{ZhSk97}, which allows efficient and structure preserving simulations. We show that their method is not only symplectic but in fact variational.

Specifically, consider mechanical systems governed by Newton's equation:
\begin{equation}
    \dot{x}= v, \quad M\dot{v}=-\nabla V(x),
    \label{NewtonEq}
\end{equation}
where $V\in \mathcal{C}^2(\R^n)$ and $M$ is a $n\times n$ symmetric, positive-definite constant matrix.

If we consider the following discrete Lagrangian (see Section \ref{Sectionderivation} for explanations):
\begin{align}
    & \tilde{\mathcal{L}}_d(x_k, x_{k+1}, a_k, a_{k+1})=h \Big( \frac{1}{2} \big( \frac{x_{k+1}-x_k}{h} \big)^T M \big( \frac{x_{k+1}-x_k}{h} \big) \nonumber\\
    & - \frac{1}{2} \big( \beta h^2 \frac{1}{2} a_k^T M a_k + V(x_k) + \beta h^2 a_k^T \nabla V(x_k) + \frac{1}{2} \beta^2 h^4 a_k^T \text{Hess} V(x_k) a_k \big) \nonumber\\
    & - \frac{1}{2} \big( \beta h^2 \frac{1}{2} a_{k+1}^T M a_{k+1} + V(x_{k+1}) + \beta h^2 a_{k+1}^T \nabla V(x_{k+1})  + \frac{1}{2} \beta^2 h^4 a_{k+1}^T \text{Hess} V(x_{k+1}) a_{k+1} \big) \Big),
    \label{eq_var_VaLiPN}
\end{align}
then the Euler-Lagrange equation of the variational principle
\[
    \delta \sum_{k=1}^N \tilde{\mathcal{L}}_d(x_k, x_{k+1}, a_k, a_{k+1}) = 0
\]
yields the following symplectic method (originally stated in Section 3 in \cite{ZhSk97}):
\begin{Integrator} \textbf{Zhang and Skeel's symplectic method (Z\&S):}
    \begin{equation}
    \begin{cases}
        x_{k+1} &=x_k+h v_k+\frac{1}{2}h^2 f_k \\
        v_{k+1} &=v_k+\frac{1}{2}h(f_k+f_{k+1}) \\
        a_k &=-M^{-1} \nabla V(x_k)- M^{-1} \emph{Hess} V(x_k) \beta h^2 a_k \\
        f_k &= a_k-\frac{1}{2}\beta^2 h^4 M^{-1} a_k \cdot V^{(3)}(x_k) \cdot a_k
    \end{cases}
    \label{SyLiPNeq}
    \end{equation}
    \label{SyLiPN}
\end{Integrator}
\noindent where $V^{(3)}(\cdot)$ is a 3rd-order tensor corresponding to 3rd-derivative of $V$, the symbol $\cdot$ stands for tensor contraction, and therefore $a_k \cdot V^{(3)}(x_k) \cdot a_k$ is again a vector.

For computational efficiency, $a_k$ should be obtained by solving a symmetric linear system instead of inverting a matrix. In this sense, Z\&S is linearly implicit.

\begin{Theorem} Z\&S is:
    \begin{enumerate}
    \item unconditionally linearly stable if $\beta\geq 1/4$.
    \item variational (and thus symplectic and conserving momentum-maps).
    \item 2nd-order convergent (if stable) and can be made arbitrarily high order convergent.
    \item symmetric (``time-reversible'').
    \end{enumerate}
\end{Theorem}

Showing Z\&S is variational ensures\footnote{In general, symplectic methods are only locally variational, but variational methods are symplectic; see for instance \cite{MaWe:01}.}, due to a discrete Noether theorem (e.g., \cite{MaWe:01} and \cite{Hairer06}), that it also preserves momentum maps that correspond to system symmetries. This additional preservation property is desired in mechanical system simulations. The variational formulation also leads to a possible extension to Lie groups (Section \ref{sec_LieGroup}).

\paragraph{Constrained dynamics:}
One of our motivations for studying Z\&S originates from a need for coarse time-steppings in penalty methods for constrained dynamics.

To model constrained dynamics, let
\begin{equation}
    \mathcal{S}(q(t)):=\int_a^b \frac{1}{2} \dot{q}(t)^T M \dot{q}(t) - V(q(t)) \, dt
\end{equation}
be the action associated with system \eqref{NewtonEq}.
Under a holonomic constraint $g(q)=0$, the system evolution coincides with the critical trajectory on the constraint manifold, i.e., the solution of:
\begin{equation}
    \delta \mathcal{S} / \delta q = 0 \text{ and for all $t$, } q(t)\in g^{-1}(0) .
    \label{eqConstrainedDynamics}
\end{equation}
This trajectory can also be obtained by solving the differential algebraic system
\begin{equation}
\begin{cases}
  M\ddot{q}=-\nabla V(q)+\lambda^T \nabla g(q) \\
  g(q)=0
\end{cases}
\label{DAE}
\end{equation}

Penalty methods approximate rigid constraints by stiff potentials; this is a classical idea  and we refer to \cite{RuUn57,Takens80,TePl87,PlBa88} for a non-comprehensive list of references. More precisely, modify the potential energy $V(q)$ to $V(q)+\frac{1}{2}\omega^2 g(q)^T g(q)$, then the solution of \eqref{eqConstrainedDynamics} is approximated by the solution of the following unconstrained mechanical system:
\begin{equation}
  M\ddot{q}^{\omega}=-\nabla V(q^{\omega})-\omega^2 g(q^{\omega})^T \nabla g(q^{\omega}),
  \label{ourConstrainedDynamics}
\end{equation}
where  $\omega$ is large enough.
Paraphrasing \cite{PlBa88},  the problem is, ``\emph{as a result of stiffness, the numerical differential equation solver takes very small time steps, using a large amount of computing time without getting much done}''.
Z\&S alleviates this problem because it can use coarse time-steps and do not solve nonlinear systems (see Section \ref{SectionConstrained}).

On a related matter, although penalty methods are widely employed and proved convergent to constrained dynamics (see Section \ref{SectionPenalty}), a quantitative analysis of its link to Lagrange multiplier approach \eqref{DAE} appeared to be lacking.
We show that the solution of \eqref{ourConstrainedDynamics} converges to that of \eqref{DAE} as $\omega\rightarrow \infty$ in the sense of two-scale Flow convergence (see Definition 1.1 in \cite{FLAVOR10}). More precisely, we have (explained in Section \ref{ConstrainedDynamicsSection}; throughout this paper, `bounded' means having a norm bounded by an $\omega$-independent constant):

\begin{Theorem}
    Denote by $q^\omega(t)$ the solution to \eqref{ourConstrainedDynamics} with $q^\omega(0)=q_0$ and $\dot{q}^\omega(0)=\dot{q}_0$ (where $g(q_0)=0$ and $\frac{d}{dt}g(q_0)=\nabla g(q_0)\cdot \dot{q}_0=0$). Suppose that $M$ is non-singular, $V(\cdot)$ is bounded from below,  $V(q)$ diverges towards infinity as $|q|\rightarrow \infty$, $V(\cdot)$ and $g(\cdot)$ are $C^2$ with bounded derivatives, and for all $q\in g^{-1}(0)$, $\nabla g(q)$ has a constant rank equal to the codimension of the constraint manifold, then
    \begin{equation}
        \lambda(t):=-\lim_{T\rightarrow 0} \lim_{\omega\rightarrow \infty} \frac{1}{T}\int_t^{t+T} \omega^2 g(q^\omega(s))\, ds
        \label{asymptoticLambda}
    \end{equation}
    exists. Also, the solution $q(t)$ of
    \begin{equation}
    \begin{cases}
        M\ddot{q}(t)=-\nabla V(q(t))+\lambda(t)^T \nabla g(q(t)) \\
        g(q(t))=0,  \qquad  q(0)=q_0, \qquad \dot{q}(0)=\dot{q}_0
    \end{cases}
    \label{DAE2}
    \end{equation}
    exists and satisfies
    \begin{equation}
        q^\omega \xrightarrow{F} q
    \end{equation}
    in the sense of two-scale Flow convergence \cite{FLAVOR10}, i.e., for all bounded $t\geq 0$ and all bounded and uniformly Lipschitz continuous test function $\varphi$,
    \begin{equation}
        \lim_{T\rightarrow 0} \lim_{\omega\rightarrow \infty} \frac{1}{T}\int_t^{t+T} \varphi(q^\omega(s)) - \varphi(q(s)) \, ds = 0
        \label{eq_Fconvergence}
    \end{equation}
    \label{asymptoticLambdaTheorem}
\end{Theorem}

\paragraph{Outline of the paper:} Section \ref{SectionIntegrator} derives Z\&S from a variational principle, relates it to Newmark integrators, and discusses its properties. Section \ref{ConstrainedDynamicsSection} illustrates how penalty methods converge to Lagrange multiplier approach. Section \ref{exampleSection} applies the method to constrained systems (pendula and water molecular dynamics), a \text{non}-constrained model of DNA division, and a mechanical system on SO(3) illustrating the benefits of a variational formulation.

\subsection{On penalty methods}
\label{SectionPenalty}

The penalty strategy of replacing holonomic constraints by stiff potentials is widely used. For example, it is a common treatment in computer graphics (e.g., \cite{TePl87,WiFl87,PlBa88}).

It is known that the penalized solution converges to constrained dynamics in $C^1$ topology, as long as its initial condition is in tangent bundle of the constraint manifold. We refer to, e.g., the pioneering work of \cite{RuUn57, Takens80},  to \cite{BoSc97, Bo98, ShZe02} for recent progress, and to Chapter XIV.3 of \cite{Hairer06} for a review.

The reverse point of view has also been employed, particularly in molecular dynamics, where stiff oscillatory molecular bonds are replaced by rigid constraints for the purpose of allowing larger time-steps (e.g., \cite{Fixman:74,Sc10}). If the initial velocity is not in the tangent plane then a correction potential might also be required to account for the non-zero normal energy (e.g., \cite{Fixman:74, Re95, ScBo97homogenization}). The Fixman potential \cite{Fixman:74} is a classical example of such a correction, in particular when investigating thermodynamic properties of molecular systems (see e.g., \cite{BaFrLe11}); on the other hand,  \cite{BoSc95} suggests that Fixman might not be the right correction for deterministic systems.

\subsection{One constrained dynamics}
\label{SectionConstrained}

Other popular constrained dynamics
methods  include:  generalized coordinates on the constraint manifold (e.g., \cite{JaVaRo93}) and Lagrange multipliers (e.g., SHAKE \cite{RyCiBe97}, RATTLE \cite{An83}, SETTLE \cite{HeBeBeFr97}, LINCS \cite{MiKo92}, M-SHAKE \cite{KrGu01}). The equivalence between these two approaches is well-established (e.g., \cite{WeMa97}).
These numerical methods allow an $o(1)$ integration step, but they also require solving nonlinear systems at each step. Unfortunately, linearization of these methods are no longer symplectic, and therefore resorting to linearization for a speed-up is at the risk of losing long-time accuracy.

The advantage of using a penalty approach depends on the system: if the system has a large number of coupled constraints, then an integration of the penalized system, even with small steps, would still be faster than generalized coordinate and Lagrange multiplier methods, which require solving high dimensional nonlinear systems.

Z\&S provides a compromise by allowing large integration steps ($o(1)$, independent of $\omega$) with limited cost of a linear solve per iteration.
It remains  accurate when applied to penalized system \eqref{ourConstrainedDynamics}, even though the $o(1)$ step does not resolve stiffness of the equation. This is because stiffness in this system results in fast oscillations non-tangent to a stable slow manifold (Section \ref{ConstrainedDynamicsSection}). Although implicit methods damp high frequencies in oscillations (e.g., \cite{HaWa96}), the approximation of fast oscillations by slower ones (as in \cite{LiAbE:08})
is sufficient  for the approximation of slow dynamics on the constraint manifold.

 We refer to M-SHAKE \cite{KrGu01} for an example of recent developments to the Lagrange multiplier method. While M-SHAKE  is limited to systems with distance constraints, Z\&S combined with penalty method can implement arbitrary holonomic constraints.

\section{Z\&S: structure-preserving and stable integrators}
\label{SectionIntegrator}
\subsection{Derivation from Newmark integrators}
\label{Sectionderivation}

The Newmark family of algorithms are extensively used in structural dynamics \cite{Ne59}:
\begin{Integrator} \textbf{Newmark:}
    \begin{equation}
    \begin{cases}
        q_{k+1} &=q_k+h\dot{q}_k+\frac{h^2}{2}[(1-2\beta)a_k+2\beta a_{k+1}] \\
        \dot{q}_{k+1} &=\dot{q}_k+h[(1-\gamma)a_k+\gamma a_{k+1}] \\
        a_k &=-M^{-1}\nabla V(q_k)
    \end{cases}
    \end{equation}
    \label{Newmark}
\end{Integrator}
Newmark is generally implicit when $\beta\neq 0$. When $\gamma=1/2$, it is 2nd-order accurate and variational \cite{KaMaOrWe00}, and we restrict ourselves to this case in this paper. Integrator \ref{Newmark} does not preserve the canonical symplectic form, and it was shown \cite{SkZhSc97,MaWe:01} that if one pushes forward the update map by a coordinate transform $\eta:TQ \rightarrow TQ$ defined as
\begin{equation}
    (x,v):=\eta (q,\dot{q})=(q+\beta h^2 M^{-1}\nabla V(q),\dot{q}) \quad,
\end{equation}
then we obtain an integrator that preserves the canonical symplectic form on $T^*Q$:
\begin{Integrator} \textbf{Push-forward Newmark:}
    \begin{equation}
    \begin{cases}
        x_{k+1} &=x_k+h v_k+\frac{1}{2}h^2 a_k \\
        v_{k+1} &=v_k+\frac{1}{2}h(a_k+a_{k+1}) \\
        a_k &=-M^{-1} \nabla V(x_k+\beta h^2 a_k)
    \end{cases}
    \label{eq_PN}
    \end{equation}
\end{Integrator}
These two methods are unconditionally linearly stable if $\beta\geq 1/4$ \cite{ChKa02,SkZhSc97}. Newmark with $\beta\geq 1/4$ is known to be nonlinearly stable under specific conditions \cite{Hu77}, and Push-forward Newmark is known to be stable near stable fixed points in non-resonant nonlinear settings \cite{SkSr00}. Nevertheless, there are nonlinear cases in which Newmark is no longer stable \cite{ErBoBu02,KuRa96}. In fact, few convergent methods are unconditionally stable for arbitrary nonlinear systems to the authors' knowledge (see also \cite{WoOd88}).

Now, consider the following discrete Lagrangian (see \cite{MaWe:01} for a review of variational integrators and \cite{AbMa08} for one of many excellent reviews of analytical mechanics):
\begin{align}
    & \mathcal{L}_d(x_k, x_{k+1}, a_k, a_{k+1})=h \Big( \frac{1}{2} \big( \frac{x_{k+1}-x_k}{h} \big)^T M \big( \frac{x_{k+1}-x_k}{h} \big) - \frac{1}{2} \big( \beta h^2 \frac{1}{2} a_k^T M a_k + V(x_k+\beta h^2 a_k) \big) \nonumber\\
    & \qquad \qquad - \frac{1}{2} \big( \beta h^2 \frac{1}{2} a_{k+1}^T M a_{k+1} + V(x_{k+1}+\beta h^2 a_{k+1}) \big) \Big),
    \label{eq_var_PN}
\end{align}
then, \eqref{eq_PN} is the associated Euler-Lagrange equation, i.e., the critical point of discretized action $\sum_k \mathcal{L}_d (x_k, x_{k+1}, a_k, a_{k+1})$. Note this variational formulation is explicit and  distinct from the one implicitly defined in \cite{MaWe:01}.

However, \eqref{eq_var_PN} is still implicit. Therefore we use  a 2nd order Taylor expansion of \eqref{eq_var_PN} and derive \eqref{eq_var_VaLiPN}. To obtain the corresponding discrete Euler-Lagrange equation, we compute $\partial \tilde{\mathcal{L}}_d / \partial a_k = 0$, which leads to
\begin{equation}
    M a_k + \nabla V(x_k) + \beta h^2 \text{Hess} V(x_k) a_k = 0 .
    \label{eq_VaLiPN_a}
\end{equation}
We then compute  the discrete Legendre transform (see \cite{MaWe:01} for notation and terminology), which introduces the momentum and leads to
\begin{equation}
 \begin{cases}
    p_k &=-D_1 L_d(x_k,x_{k+1},a_k,a_{k+1}) \\
        &= M \frac{x_{k+1}-x_k}{h}+\frac{h}{2}\left(-M a_k + \frac{1}{2}\beta^2 h^4 a_k \cdot V^{(3)}(x_k) \cdot a_k \right) \\
    p_{k+1} &= D_2 L_d(x_k,x_{k+1},a_k,a_{k+1}) \\
            &= M \frac{x_{k+1}-x_k}{h}-\frac{h}{2}\left(-M a_{k+1} + \frac{1}{2}\beta^2 h^4 a_{k+1} \cdot V^{(3)}(x_{k+1}) \cdot a_{k+1} \right)
 \end{cases}
 \label{eq_VaLiPN_p}
\end{equation}
Since the velocity and momentum are related via $v_k=M^{-1} p_k$, we obtain the Z\&S update \eqref{SyLiPNeq}.

Because the new discrete Lagrangian $\tilde{\mathcal{L}}_d$ is quadratic in $a$, nonlinear solves in Push-forward Newmark are replaced by linear solves in Z\&S. Consequently, Z\&S exhibits a speed advantage. Numerical illustrations of this advantage are in Section \ref{exampleSection}.

\subsection{On Z\&S}
\label{SectionViewAngles}
\paragraph{Linearizing equations:} Z\&S is obtained as the linearization of Push-forward Newmark update map combined with a small correction. More precisely, the Taylor expansion of Line 3 of \eqref{eq_PN} leads to \eqref{eq_VaLiPN_a}, and Lines 1-2 in \eqref{eq_PN} can be rewritten in terms of momentum as
\[
 \begin{cases}
    p_k &=-D_1 L_d(x_k,x_{k+1},a_k,a_{k+1}) \\
        &= M \frac{x_{k+1}-x_k}{h}+\frac{h}{2}\left(-M a_k \right) \\
    p_{k+1} &= D_2 L_d(x_k,x_{k+1},a_k,a_{k+1}) \\
            &= M \frac{x_{k+1}-x_k}{h}-\frac{h}{2}\left(-M a_{k+1} \right)
 \end{cases}
\]
The difference are two $\mathcal{O}(h^5)$ terms (corresponding to $\frac{1}{4}\beta^2 h^5 a \cdot V^{(3)}(x) \cdot a$). To be consistent with the literature, we summarize this variant using velocity instead of momentum:

\begin{Integrator} \textbf{Zhang and Skeel's method simplified (Z\&Ss):}
    \begin{equation}
    \begin{cases}
        x_{k+1} &=x_k+h v_k+\frac{1}{2}h^2 a_k \\
        v_{k+1} &=v_k+\frac{1}{2}h(a_k+a_{k+1}) \\
        a_k &=-(M+\emph{Hess} V(x_k) \beta h^2)^{-1} \nabla V(x_k)
    \end{cases}
    \label{oldSyLiPNeq}
    \end{equation}
    \label{SyLiPN2}
\end{Integrator}
\begin{Theorem} Z\&Ss is:
    \begin{enumerate}
    \item unconditionally linearly stable if $\beta\geq 1/4$.
    \item symplectic, if the $n\times n$ matrix $M+\emph{Hess}V(x)\beta h^2$ commutes with the $n\times n$ matrix $V^{(3)}(x) \cdot (M+\emph{Hess}V(x)\beta h^2)^{-1} \cdot \nabla V(x)$ ($\cdot$ is tensor contraction).
    \item 2nd-order convergent (if stable) and can be made arbitrarily high order convergent.
    \item symmetric (``time-reversible'').
    \end{enumerate}
    \label{thm_SyLiPN2}
\end{Theorem}
\noindent
Z\&Ss is not always symplectic due to the removal of $\mathcal{O}(h^5)$ terms. However, it requires no high-order tensor operations and is thus a good choice for high-dimensional problems. 

\paragraph{Partial Newton solve:} Line 3 of Z\&Ss can be viewed as executing only the first step of a Newton solver for the nonlinear equation $a_k =-M^{-1} \nabla V(x_k+\beta h^2 a_k)$.

\paragraph{Preconditioning, filtering and regularization:} The factor of $(M+\text{Hess}V(x)\beta h^2)^{-1}$ in front of $\nabla V(x)$ can be thought as an optimization preconditioner or a way to filter \cite{HeGoGo07,FaGr11} / regularize \cite{Skeel:99, Sanz-Serna:08} high frequency oscillations.

\subsection{Properties}
(Proofs of results introduced in this paragraph are standard and available online at \url{http://www.math.gatech.edu/~mtao/TaOw14_supplemental.pdf}).
\begin{Theorem}[Stability]
    Z\&S (Integrator \ref{SyLiPN}) is unconditionally linearly stable if and only if $\beta\geq 1/4$.
\end{Theorem}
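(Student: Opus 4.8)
The plan is to reduce the stability question to the action of SyLiPN on a linear test system and then to analyze a single $2\times 2$ amplification matrix. Linear (unconditional) stability means that when SyLiPN is applied to the linearization of \eqref{NewtonEq} about an equilibrium, i.e. to a quadratic potential $V(x)=\tfrac12 x^T K x$ with $K=K^T$ and $M=M^T\succ 0$, the numerical trajectory stays bounded for \emph{every} stepsize $h>0$. The first observation I would make is that the Taylor linearization performed in the third line of \eqref{SyLiPNeq} is \emph{exact} for a quadratic $V$, since $\nabla V(x_k+\beta h^2 a_k)=\nabla V(x_k)+\operatorname{Hess}V(x_k)\,\beta h^2 a_k$. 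Hence, on such a test problem, SyLiPN coincides identically with Push-forward Newmark (and, up to the fixed linear change of variables $\eta$, with Newmark), so one could simply invoke the classical unconditional linear stability of the Newmark family with $\gamma=1/2$, $\beta\ge 1/4$ \cite{ChKa02,SkZhSc97}. For completeness I would also give the short self-contained computation below.

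Simultaneously diagonalizing the pair $(M,K)$ (possible since $M\succ 0$), the scheme decouples into independent scalar modes $\ddot x=-\Omega^2 x$, where $\Omega^2\ge 0$ ranges over the eigenvalues of $M^{-1}K$. Fixing one mode, writing $w_k:=hv_k$, and setting $\mu:=h^2\Omega^2\ge 0$ and $\nu:=\mu/(1+\beta\mu)$, the third line of \eqref{SyLiPNeq} becomes $h^2 a_k=-\nu x_k$, and substituting into the first two lines yields the one-step propagator
\begin{equation}
\begin{pmatrix} x_{k+1}\\ w_{k+1}\end{pmatrix}=A\begin{pmatrix} x_k\\ w_k\end{pmatrix},\qquad
A=\begin{pmatrix} 1-\tfrac{\nu}{2} & 1\\ -\nu+\tfrac{\nu^2}{4} & 1-\tfrac{\nu}{2}\end{pmatrix}.
\end{equation}
A direct computation gives $\det A=1$ (consistent with symplecticity) and $\operatorname{tr}A=2-\nu$, so the eigenvalues of $A$ solve $\lambda^2-(2-\nu)\lambda+1=0$. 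They are complex conjugates on the unit circle, and $A$ is diagonalizable (so its powers $A^k$ stay bounded), precisely when $|\operatorname{tr}A|<2$, i.e. when $0<\nu<4$; at the two endpoints $A$ has a nontrivial Jordan block and the mode grows secularly, which is why strict inequalities are needed and why one restricts attention to oscillatory modes $\Omega^2>0$.

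It then remains to check the range of $\nu$. The map $\mu\mapsto \mu/(1+\beta\mu)$ is strictly increasing on $[0,\infty)$ with limit $1/\beta$, so $\nu\in[0,1/\beta)$ and $\nu>0$ whenever $\mu>0$. If $\beta\ge 1/4$ then $1/\beta\le 4$, hence $\nu\in(0,4)$ for every oscillatory mode and every stepsize $h$: SyLiPN is unconditionally linearly stable. Conversely, if $\beta<1/4$, taking $\mu>4/(1-4\beta)$, i.e. $h>2/(\Omega\sqrt{1-4\beta})$, forces $\nu>4$ and destroys stability, so the threshold $\beta=1/4$ is sharp, recovering the classical Newmark result.

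The main obstacle, such as it is, lies in the two borderline/degenerate points flagged above: (i) the case $|\operatorname{tr}A|=2$, where one must verify that the repeated eigenvalue carries a nontrivial Jordan block in order to conclude instability, and (ii) the reduction to scalar modes, which requires $M\succ 0$ so that $(M,K)$ is simultaneously diagonalizable by congruence — the natural hypothesis here, since linear stability is assessed relative to a stable underlying linear mechanical system ($M\succ 0$, $K\succeq 0$). Everything else is the routine $2\times 2$ algebra sketched above.
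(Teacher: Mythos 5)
Your proposal is correct and follows essentially the same route as the paper's proof: simultaneous diagonalization of $(M,K)$ by congruence to reduce to scalar modes, substitution of the linearly-implicit force into the update to obtain a $2\times 2$ amplification matrix, and the observation that its eigenvalues lie on the unit circle because $h^2\alpha=\nu<1/\beta\le 4$ when $\beta\ge 1/4$. Your treatment is in fact a bit more careful than the paper's at the borderline case $|\operatorname{tr}A|=2$ (where unit-modulus eigenvalues alone do not give boundedness due to a possible Jordan block), and your sharpness converse for $\beta<1/4$ is a pleasant extra, but neither changes the underlying argument.
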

The proof of the unconditional linearly stability (for $\beta\geq 1/4$)  of Integrators \ref{SyLiPN2} and  \ref{SyLiPN} are similar.
 If the potential is of form $V(x)=V_0(x)+\epsilon^{-1} V_1(x)$, then the following modification of Z\&Ss is  unconditionally linearly stable\footnote{in the sense that the solution remains bounded for all $h$ when $V_0$ has Lipschitz-continuous 1st-derivative with bounded Lipschitz constant and $V_1$ is quadratic and positive definite.} as long as $\beta>1/4+\mathcal{O}(\epsilon)$:
    \begin{Integrator} \textbf{Simplified Z\&Ss for stiff systems ($\epsilon^{-1}\gg 1$):}
        \begin{equation}
        \begin{cases}
            x_{k+1} &=x_k+h v_k+\frac{1}{2}h^2 a_k \\
            v_{k+1} &=v_k+\frac{1}{2}h(a_k+a_{k+1}) \\
            a_k &=-M^{-1} (\nabla V_0(x_k)+\epsilon^{-1}\nabla V_1(x_k) )- M^{-1} \epsilon^{-1}\emph{Hess} V_1(x_k) \beta h^2 a_k
        \end{cases}
        \end{equation}
        \label{SyLiPN3}
    \end{Integrator}

\begin{Theorem}[Consistency]
    Consider an integrator for \eqref{NewtonEq} given by:
    \begin{equation}
    \begin{cases}
        x_{k+1} &=x_k+h v_k+\frac{1}{2}h^2 a_k \\
        v_{k+1} &=v_k+\frac{1}{2}h(a_k+a_{k+1}+h^4 g(x_k)+h^4 g(x_{k+1})) \\
        a_k &=-M^{-1} \nabla V(x_k)- M^{-1} f(x_k) h^2 a_k
    \end{cases},
    \label{f9g3uinlvu3bofbu1og}
    \end{equation}
    where $f, g\in \mathcal{C}(Q)$ are arbitrary functions. If $V\in \mathcal{C}^3(Q)$, this integrator has 3rd order truncation error.
    \label{GeneralAccuracy}
\end{Theorem}

\begin{Corollary}
    Z\&S (Integrator \ref{SyLiPN}), Z\&Ss (Integrator \ref{SyLiPN2}) and simplified Z\&Ss for stiff systems (Integrator \ref{SyLiPN3}) are 2nd order convergent, provided stability.
    \label{TheoremLocalError}
\end{Corollary}
Symmetry (i.e., time-reversibility) is one desired property of numerical integrators, because it leads to good long time performance (see for instance \cite{Hairer06} or \cite{MR2132573}).

\begin{Theorem}[Symmetry / Time-Reversibility]
    Let $f\in \mathcal{C}^1(Q)$ is an arbitrary function. The integrator defined by
    \begin{equation}
    \begin{cases}
        x_{k+1} &=x_k+h v_k+\frac{1}{2}h^2 f_k \\
        v_{k+1} &=v_k+\frac{1}{2}h(f_k+f_{k+1}) \\
        f_k &=f(x_k)
    \end{cases},
    \label{n413tad15sigpquhrgp1}
    \end{equation}
     is symmetric (time-reversible).
    \label{GeneralSymmetricIntegrators}
\end{Theorem}

\begin{Corollary}
    Z\&S (Integrator \ref{SyLiPN}) is symmetric (time-reversible).
\end{Corollary}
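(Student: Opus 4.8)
The plan is to observe that SyLiPN is already an instance of the abstract family covered by Theorem \ref{GeneralSymplecticIntegrators}, so the corollary reduces to a direct specialization. Using the equivalent form \eqref{iowe4nfqv0bjqire} of the third line of \eqref{SyLiPNeq}, the acceleration produced by SyLiPN is
\[
  a_k = -\bigl(M + \beta h^2\,\text{Hess}\,V(x_k)\bigr)^{-1}\nabla V(x_k),
\]
which, for a fixed timestep $h$, is a function of $x_k$ alone: it does not involve $v_k$ and it does not involve $x_{k+1}$ once the (linear) system has been solved. Hence, setting
\[
  f(x) := -\bigl(M + \beta h^2\,\text{Hess}\,V(x)\bigr)^{-1}\nabla V(x),
\]
the SyLiPN update \eqref{SyLiPNeq} is exactly the update \eqref{n413tad15sigpquhrgp} with this choice of $f$, and Theorem \ref{GeneralSymplecticIntegrators} yields $\mathcal{D}^T \mathcal{J} \mathcal{D} = \mathcal{J}$, i.e.\ symplecticity, as soon as $f \in \mathcal{C}^1$.

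So the work is entirely in checking that $f$ is a well-defined $\mathcal{C}^1$ map. First I would verify invertibility of $M + \beta h^2\,\text{Hess}\,V(x)$: since $M$ is nonsingular and $\text{Hess}\,V$ is continuous and locally bounded, for $h$ below a threshold depending only on $\|M^{-1}\|$ and a local bound on $\|\text{Hess}\,V\|$ the perturbation $\beta h^2\,\text{Hess}\,V(x)$ has operator norm strictly less than $1/\|M^{-1}\|$, so the matrix is invertible with a (locally) uniformly bounded inverse; alternatively, in the penalty-method setting where $M \succ 0$ and $\text{Hess}\,V \succeq 0$, invertibility holds for every $h$. Next I would check regularity: $\nabla V$ is $\mathcal{C}^1$, $\text{Hess}\,V$ is $\mathcal{C}^1$ provided $V \in \mathcal{C}^3$, and matrix inversion is smooth on the invertible locus, so $f \in \mathcal{C}^1$ there. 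Then the Jacobian computation in the proof of Theorem \ref{GeneralSymplecticIntegrators} applies verbatim — note that that computation never uses any structural property of $f'$ beyond it being a well-defined matrix, so the explicit form $f'(x_k) = -M^{-1}\text{Hess}\,V(x_k) + \mathcal{O}(h^2)$ is not even needed.

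The only genuine subtlety, and the step I expect to be the main obstacle, is precisely this regularity/invertibility bookkeeping: SyLiPN is posed for $V \in \mathcal{C}^2$, whereas realizing $a_k$ as a $\mathcal{C}^1$ function of $x_k$ and invoking Theorem \ref{GeneralSymplecticIntegrators} naturally wants $V \in \mathcal{C}^3$ (so that $\text{Hess}\,V$ is $\mathcal{C}^1$) together with invertibility of the coefficient matrix along the numerical trajectory. I would therefore state the corollary under the standing hypotheses $V \in \mathcal{C}^3$ and $h$ small enough (or $M \succ 0$, $\text{Hess}\,V \succeq 0$), remark that these are exactly the hypotheses under which SyLiPN is applied in the sequel, and conclude. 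Beyond this, no computation is required: the entire content of the corollary is the observation that the implicit-looking force evaluation in \eqref{SyLiPNeq} collapses, after one linear solve, to an explicit $\mathcal{C}^1$ function of $x_k$, placing SyLiPN in the class \eqref{n413tad15sigpquhrgp}.
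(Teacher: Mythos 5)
Your argument is exactly the paper's: the force evaluation \eqref{iowe4nfqv0bjqire} makes $a_k$ a function of $x_k$ alone, so SyLiPN is an instance of \eqref{n413tad15sigpquhrgp} and Theorem \ref{GeneralSymplecticIntegrators} applies. The extra care you take about invertibility of $M+\beta h^2\,\mathrm{Hess}\,V(x_k)$ and about needing $V\in\mathcal{C}^3$ for $f\in\mathcal{C}^1$ is reasonable bookkeeping that the paper simply leaves implicit, but it does not change the route.
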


\begin{Remark}
    Arbitrary high order Z\&S can be obtained using standard splitting schemes as in \cite{Yoshida:90,Neri:88,Hairer06}. A 4th-order example is provided in the supplemental material.
    \label{RemarkHighOrder}
\end{Remark}

\begin{Theorem}
    Z\&S (Integrator \ref{SyLiPN}) is symplectic.
    \label{TheoremSymplecticity}
\end{Theorem}

\begin{Lemma}[Symplecticity]
    Consider an integrator given by \eqref{n413tad15sigpquhrgp1}.
    If $f\in \mathcal{C}^1(Q)$ is a function with symmetric Jacobian, then this integrator is symplectic.
    \label{GeneralSymplecticIntegrators}
\end{Lemma}

\begin{Remark}
    The commutation condition in Theorem \ref{thm_SyLiPN2} ensures a symmetric Jacobian and hence the symplecticity of Z\&Ss. Two very special cases where this condition is satisfied are: when the system contains only 1 degree of freedom, or when $\text{Hess}(V)$ can be diagonalized by a matrix independent of $x$.
\end{Remark}

\begin{Remark}
    Fully-nonlinear implicit symplectic methods (e.g., midpoint or Newmark) are not exactly symplectic due to numerical errors in nonlinear solves, which are oftentimes much larger than those in linear solves.
\end{Remark}

\section{Lagrange multiplier methods as limits of penalty methods}
\label{ConstrainedDynamicsSection}
Lagrange multiplier and penalty methods respectively simulate \eqref{DAE} and \eqref{ourConstrainedDynamics}. It is known (Section \ref{SectionPenalty} and \ref{SectionConstrained}) that both are equivalent to constrained dynamics (in the $\omega\rightarrow\infty$ limit). We now quantify the equivalence between themselves.

First observe that this equivalence is not necessarily achieved via
\begin{equation}
    \lambda(t)=-\lim_{\omega\rightarrow\infty} \omega^2 g(q^\omega(t))
    \label{v9a87dfgfoiyu3bogtuib1o}
\end{equation}
Consider for instance $V=0$, $g(q)=q$, $q^\omega(0)=1/\omega^2$ and $p^\omega(0)=0$; \eqref{ourConstrainedDynamics} leads to $q^\omega(t)=\cos(\omega t)/\omega^2$, and \eqref{DAE} yields $q(t)=0$ and $\lambda(t)=0$, but then \eqref{v9a87dfgfoiyu3bogtuib1o} cannot hold because $\lim_{\omega\rightarrow\infty} \omega^2 g(q^\omega(t))$ does not exist.

The appropriate notion of equivalence is provided Theorem \ref{asymptoticLambdaTheorem}. The idea is as follows: energy conservation implies that $g(q^\omega)$ is at most $\mathcal{O}(1/\omega)$ (see Lemma \ref{Lemma1}). In fact, $g(q^\omega)$ can be further shown to be $\mathcal{O}(1/\omega^2)$ (see Remark \ref{p8ohrfqilbvlorgbgouob14} or \cite{KeCo96}), and constraints are satisfied with small errors that oscillate rapidly. To describe the Lagrange multiplier system \eqref{DAE} as a limit of penalized systems \eqref{ourConstrainedDynamics}, the convergence of these fast oscillations should be understood in a weak sense, whereas slow dynamics on the constrained manifold converges strongly. Thus we employ two-scale Flow convergence (Eq.\ref{eq_Fconvergence}) for this description. Convergence is first proved for flat constraint manifold (Lemma \ref{Lemma2}), and then local charts are patched together (see Appendix); this leads to Theorem \ref{asymptoticLambdaTheorem}.

\begin{Remark}
    If the limit in \eqref{v9a87dfgfoiyu3bogtuib1o} exists, then \eqref{asymptoticLambda} simplifies to \eqref{v9a87dfgfoiyu3bogtuib1o} and two-scale F-convergence becomes strong convergence. However, we are not aware of a practical example where such a limit exists.
\end{Remark}

\section{Application examples}
\label{exampleSection}
Z\&S (Integrator \ref{SyLiPN}) is applied in Sections \ref{SectionDoublePendulum},\ref{SectionMultiplependula} and \ref{SectionDNAexample}; Section \ref{WaterExample} employs simplified Z\&Ss for stiff system (Integrator \ref{SyLiPN3}) due to its efficiency for high-dimensional systems; Section \ref{sec_LieGroup} is based on variational formulation \eqref{eq_var_VaLiPN}.

Speed comparisons are provided in terms of running times (using Matlab 7.7 on an Intel Core 2 Duo 2.4G laptop, with nonlinear solver of `fsolve'); however, these numbers are machine and platform dependent (e.g., Matlab is very well-optimized for linear algebra), and should serve only as a qualitative illustration of efficiencies.



\subsection{Double pendulum}
\label{SectionDoublePendulum}

\paragraph{Implementation:}
One way to represent planar double pendulum is to use 4 degrees of freedom and 2 nonlinear constraints. Using the notations of \eqref{ourConstrainedDynamics}, we have
\[
    M=\begin{bmatrix} m_1 & 0 & 0 & 0 \\ 0 & m_1 & 0 & 0 \\ 0 & 0 & m_2 & 0 \\ 0 & 0 & 0 & m_2 \end{bmatrix},   \qquad
    \begin{aligned}
        &V(x_1,y_1,x_2,y_2) = -\text{g}y_1-\text{g}y_2, \\
        &g(x_1,y_1,x_2,y_2)=\begin{bmatrix} x_1^2+y_1^2-L_1^2 \\ (x_2-x_1)^2+(y_2-y_1)^2-L_2^2 \end{bmatrix} .
    \end{aligned}
\]
For simplicity, we adopt a dimensionless convention and assume $m_1=m_2=\text{g}=1$.

The Z\&S simulation of the penalized system \eqref{ourConstrainedDynamics} is straightforward. SHAKE \cite{RyCiBe97} is used as the Lagrangian multiplier method in our experiments; it is nonlinearly implicit.

Symplectic integration in generalized coordinates $\theta, \phi$ ($x_1=L_1\sin\theta$, $y_1=-L_1\cos\theta$, $x_2=L_1\sin\theta+L_2\sin\phi$, $y_2=-L_1\cos\theta-L_2\cos\phi$) is also implicit. This is because, after writing down the Lagrangian, one will note a position-dependent mass matrix of
\begin{equation}
    \tilde{M}(\theta,\phi)=\begin{bmatrix} 2L_1^2 & L_1L_2(\cos\theta\cos\phi+\sin\theta\sin\phi) \\ L_1L_2(\cos\theta\cos\phi+\sin\theta\sin\phi) & L_2^2 \end{bmatrix},
    \label{ExpressionMassMatrix}
\end{equation}
Consequently, even the most well known ``explicit'' variational integrators such as variational Euler (i.e., leapfrog) and Velocity-Verlet, will be implicit.

Note although $g$ is quadratic, the penalized ODE is cubically nonlinear.

\begin{figure}[h]
\includegraphics{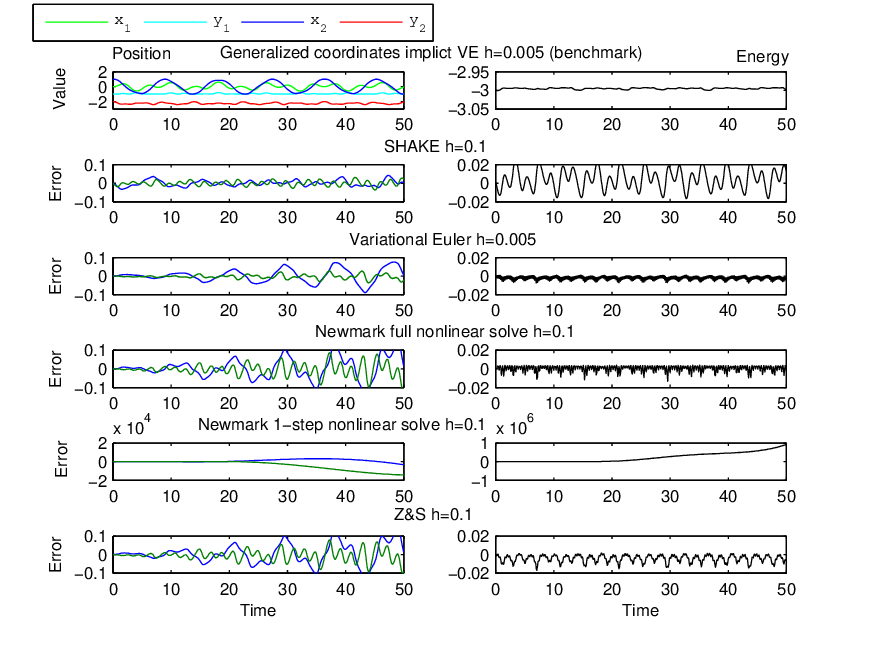}
\caption{\footnotesize Errors of SHAKE, Variational Euler on the penalized system, Newmark (with nonlinear systems fully solved), linearly-implicit Newmark (with only 1st iteration of nonlinear solve at each step), and Z\&S. Benchmark is provided by small step Variational Euler in generalized coordinates. Initial conditions are $x_1(0)=0,y_1(0)=-1,x_2(0)=1,y_2(0)=-2$, zero momenta; $L_1=1$ and $L_2=\sqrt{2}$. $\omega=20$ in Rows 3-6. $\beta=0.4$. Row 3 uses $h=0.1/\omega$ for stability, and Row 2,4,5,6 use a 20x bigger $h=0.1$. Position errors are only shown on $x_2$ and $y_2$ for readability.}
\label{comparison}
\end{figure}

%
%

\begin{figure}
\centering
\footnotesize
\subfigure[$\omega^2\|g(q(T))\|$ numerically computed as a function of $\omega$. $T=50$ fixed.]{
\includegraphics[width=0.45\textwidth]{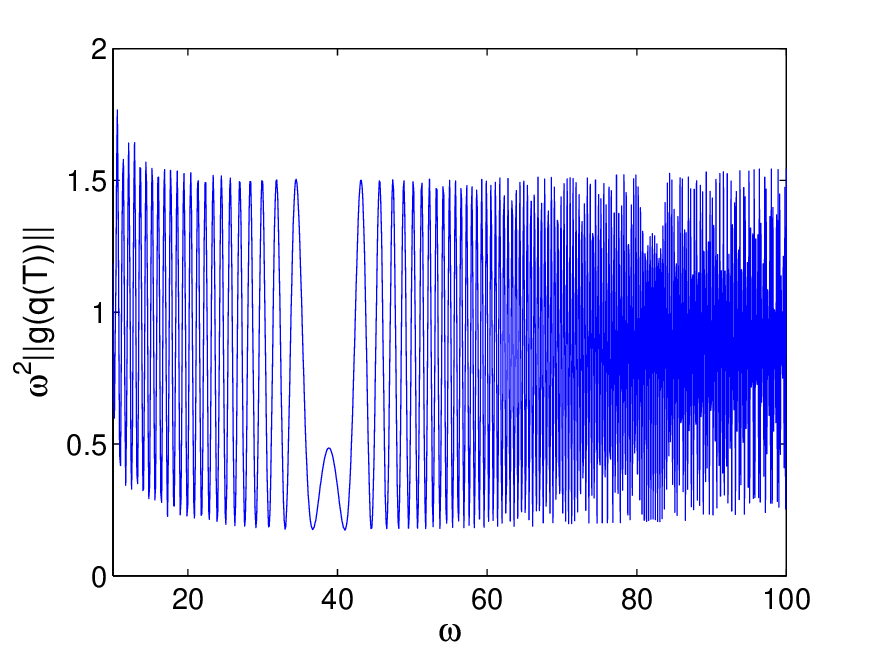}
\label{DoublePendulumConstraintOmegaSqr}
}
\quad
\subfigure[Lagrange multipliers computed by \eqref{asymptoticLambda} from $\omega=20$ penalized system and by SHAKE. The integral in \eqref{asymptoticLambda} is approximated by empirical average over time window of width 0.2.]{
\includegraphics[width=0.45\textwidth]{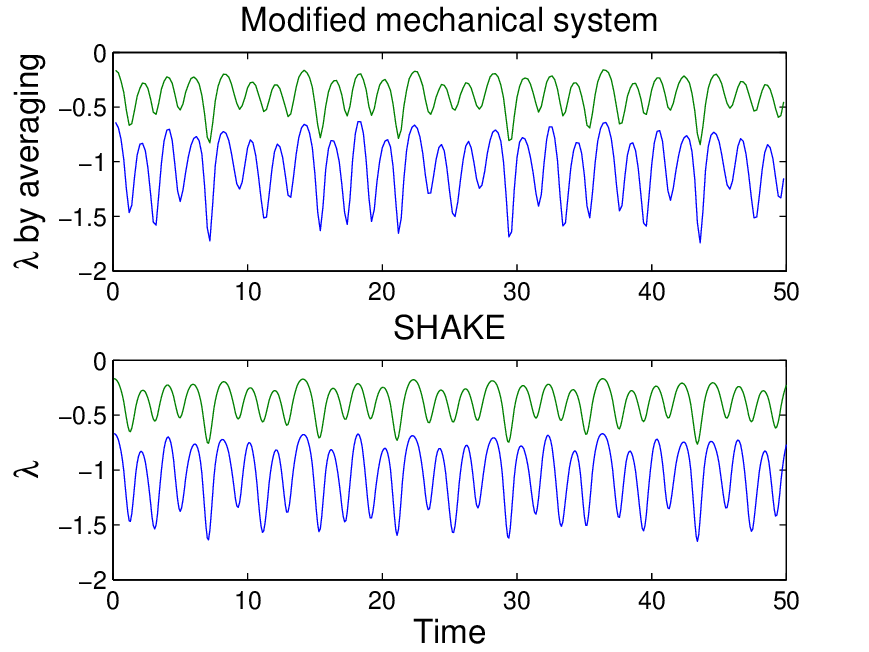}
\label{AsymptoticLambda}
}
\vspace{-8pt}
\caption{\footnotesize Satisfaction of constraints and Lagrangian multiplier. Z\&S with $h=0.01$ is used (for smooth curve; Verlet would require $h=0.001$ for stability); other parameters are same as in Figure \ref{comparison}.}
\end{figure}

\paragraph{Results:}
Figure \ref{comparison} illustrates errors of different methods. Newmark (Integrator \ref{Newmark}) with only 1st step of nonlinear solve (Row 5) has a large error due to the loss of symplecticity, even though the method is still consistent. On the contrary, Z\&S (Row 6) yields small errors almost identical to those of fully-nonlinear-solve Newmark (Row 4).

Z\&S produces larger error than SHAKE, because there is modeling error due to finite $\omega$ in addition to integration error (Row 3). We chose an intermediate $\omega$, which is sufficiently large to approximate the constraints, yet small enough to show that the penalized system is only an approximation. A larger $\omega$ leads to a more accurate approximation, but if it is too large, e.g., $\omega=2000$ (i.e., a stiffness of $\omega^2=4\times 10^6$), instability occurs in all Z\&S, original Newmark, and implicit midpoint due to strong nonlinearity.

If $\omega$ is finite, the approximation error is predicted to be $\mathcal{O}(\omega^{-2})$ (Remark \ref{p8ohrfqilbvlorgbgouob14}). See Figure \ref{DoublePendulumConstraintOmegaSqr} for a numerical illustration. Figure \ref{AsymptoticLambda} compares the Lagrange multiplier computed by SHAKE with the one obtained from the penalized system via Theorem \ref{asymptoticLambdaTheorem}. There is no strong convergence but only a 2-scale F-convergence.


It is known that the double pendulum contains a chaotic region (e.g., \cite{RiSc84}). Variational integrator is desired for simulating such systems \cite{ChSc90,McAt92}. None of our symplectic simulations (Row 1,2,3,4,6) led to numerical leakage between regular and chaotic regions.

Generalized coordinate implicit VE (benchmark), SHAKE, Variational Euler, Newmark with full nonlinear solve, Newmark with one-step nonlinear solve, and Z\&S respectively spent 91.3, 3.8, 1.0, 5.3, 0.2 (2.6 if $\text{Hess}V$ is not analytically provided but approximated by the nonlinear solver), and 0.4 seconds on the above simulation.


\subsection{A simple high-dimensional example: a chain of many pendula}
\label{SectionMultiplependula}
Consider a chain of $n$ pendula, which approximates a continuous rope. The system is similarly modeled by \eqref{ourConstrainedDynamics} with
\[
    M=\begin{bmatrix} 1 & 0 & \cdots & 0 & 0 \\ 0 & 1 & \cdots & 0 & 0 \\ \vdots & \vdots & \ddots & \vdots & \vdots \\ 0 & 0 & \cdots & 1 & 0 \\ 0 & 0 & \cdots & 0 & 1 \end{bmatrix},   \quad
    \begin{aligned}
        & V(x_1,y_1,\cdots,x_n,y_n)=-\sum_{i=1}^n y_i, \\
        & g(x_1,y_1,\cdots,x_n,y_n)=\begin{bmatrix} x_1^2+y_1^2-L_1^2 \\ (x_2-x_1)^2+(y_2-y_1)^2-L_2^2 \\ \vdots \\ (x_n-x_{n-1})^2+(y_n-y_{n-1})^2-L_n^2 \end{bmatrix} .
    \end{aligned}
\]

\begin{figure}[h]
\center
\includegraphics[width=0.8\textwidth]{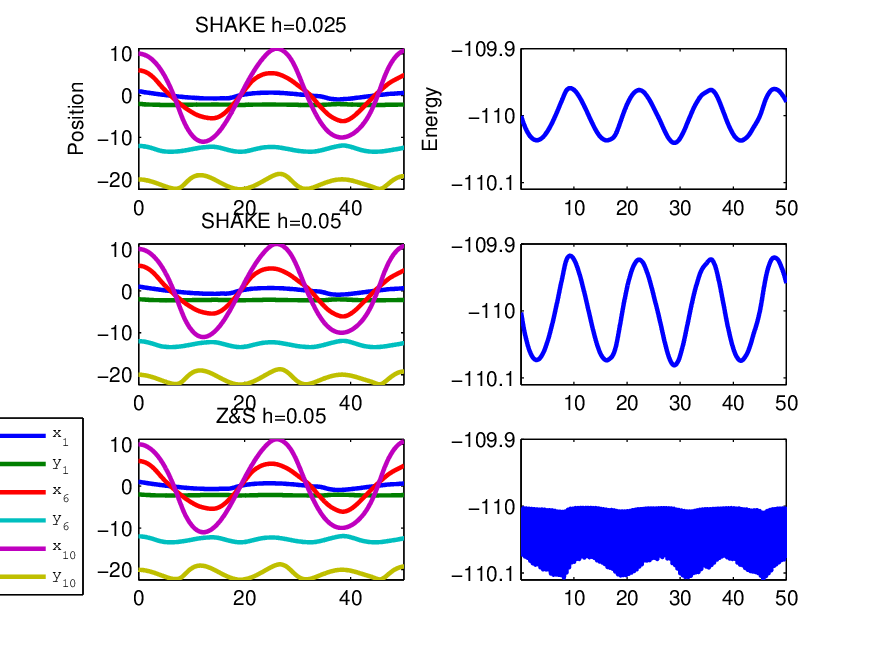}
\vspace{-12pt}
\caption{\footnotesize Simulations by SHAKE with $h=0.05$ and $h=0.025$ and by Z\&S with $h=0.05$ on $\omega=20$ penalized system. $n=10$; $x_i(0)=i,y_i(0)=-2i$ for $i=1,\ldots,n$ and initial momenta are zero; $L_i=\sqrt{5}$; $\beta=0.4$. For clarity, not all degrees of freedom are shown.}
\label{comparison2}
\end{figure}

Figure \ref{comparison2} shows good agreement between SHAKE and Z\&S (trajectories instead of errors are shown due to lack of accurate benchmark --- an analytical solution is unavailable, and Lagrange multiplier formulations and generalized coordinate approaches involve solving large nonlinear systems). SHAKE with $h=0.025$, $h=0.05$, and Z\&S with $h=0.05$ respectively spent 16.5, 8.1, and 1.1 seconds in these simulations.

\subsection{Molecular dynamics of water cluster}
\label{WaterExample}
Consider the dynamics of water molecules, each interacting with others via non-bonded interactions of electrostatic and van der Waals forces (both highly nonlinear).

\paragraph{Water model:} Use the popular TIP3P model (e.g., \cite{JoCh83})). Let $q_{ai}$ and $p_{ai}$ be position and momentum of $a$th molecule's $i$th atom (both 3-vectors). The Hamiltonian is:
\begin{equation}
    \mathcal{H}=\sum_{a=1}^N \sum_{i=1}^3 \frac{1}{2}p_{ai}^T m_i^{-1} p_{ai}+\sum_{a=1}^{N-1}\sum_{b=a+1}^{N} \left(
    \sum_{i=1}^3 \sum_{j=1}^3 \frac{K_c Q_i Q_j}{r_{ai,bj}}+\frac{A}{r_{a2,b2}^{12}}-\frac{C}{r_{a2,b2}^6}
    \right) ,
\end{equation}
where $r_{ai,bj}:=\|q_{ai}-q_{bj}\|$ is inter-atom distance, $m_1=m_3$ are hydrogen mass and $m_2$ oxygen, $K_c$ is electrostatic constant, $Q_i$ is partial charge of atom $i$ relative to electron charge, $A$ and $C$ are Lennard-Jones constants that approximate van der Waals forces.

In this TIP3P model (or many other prevailing models such as SPC, BF, TIPS2, and TIP4P, which are discussed in, e.g., \cite{JoCh83,SpMa98}), each water molecule is considered as a rigid body, with two O-H bond lengths and H-O-H bond angle fixed as constants $r_{OH}$ and $\alpha_{HOH}$. Detailed values of these model parameters could be found in, e.g., \cite{JoCh83}.

Therefore, the following vectorial constraint enforces the geometry of molecules:
\begin{equation}
    g(q)=\begin{bmatrix} (q_{11}-q_{12})(q_{11}-q_{12})^T-r_{OH}^2 \\
     (q_{13}-q_{12})(q_{13}-q_{12})^T-r_{OH}^2 \\ (q_{11}-q_{13})(q_{11}-q_{13})^T-r_{HH}^2 \\
     \vdots \\
     (q_{N1}-q_{N2})(q_{N1}-q_{N2})^T-r_{OH}^2 \\
     (q_{N3}-q_{N2})(q_{N3}-q_{N2})^T-r_{OH}^2 \\
     (q_{N1}-q_{N3})(q_{N1}-q_{N3})^T-r_{HH}^2 \end{bmatrix},
\end{equation}
where $r_{HH}:=2r_{OH} \sin(\alpha_{HOH}/2)$ is a constant. This leads to penalized Hamiltonian
\begin{equation}
    \tilde{\mathcal{H}}=\mathcal{H}+\frac{1}{2}\omega^2 \sum_{a=1}^N \left( (r_{a1,a2}^2-r_{OH}^2)^2+(r_{a3,a2}^2-r_{OH}^2)^2+(r_{a1,a3}^2-r_{HH}^2)^2 \right) .
    \label{pvqreuphp3410hrp1ubo}
\end{equation}

\paragraph{Constant temperature simulation:}
Constant temperature simulations are of practical importance because (i) thermal fluctuations are an indispensable component of molecular dynamics, and (ii) the N-body system is chaotic and its long-time deterministic simulation has limited predictive power. We use Langevin dynamics (e.g., \cite{Sc10}) as our constant temperature model.

In this model, molecules experience perturbation by noise and dissipation due to friction, and the dynamics can be expressed by the following SDEs:
\begin{equation}
\begin{cases}
    dq=\frac{\partial \tilde{\mathcal{H}}}{\partial p} \, dt \\
    dp=-\frac{\partial \tilde{\mathcal{H}}}{\partial q} \, dt -\gamma \frac{\partial \tilde{\mathcal{H}}}{\partial p} \, dt +\sqrt{2\gamma \beta^{-1}} dW
    \label{agoqo9834hgvqurbnp}
\end{cases},
\end{equation}
where $W$ is a $9N$-dimensional Wiener process, $\beta^{-1}>0$ is the constant temperature, and $\gamma>0$ is dissipation strength. The system admits an invariant measure of Boltzmann-Gibbs (BG; also known as the canonical ensemble), given by:
\begin{equation}
    \pi(q,p)=Z^{-1} \exp(-\beta \tilde{\mathcal{H}}),
    \label{BGdistribution}
\end{equation}
where $Z=\int_{\mathbb{R}^{18N}} \exp(-\beta \tilde{\mathcal{H}}) \, dq \, dp$ is the partition function.

To simulate \eqref{agoqo9834hgvqurbnp}, we use the  Geometric Langevin Algorithm (GLA; see \cite{BoOw:09}). GLA allows for an extension of Hamiltonian integrators to Langevin integrators. It is a splitting scheme, based on composing the one-step update of a determinstic integrator with the exact flow of an Ornstein-Uhlenbeck process (OU for short, given by $dp=-\gamma M^{-1} p \, dt +\sqrt{2\gamma \beta^{-1}} dW$, i.e., driftless noise and friction). It has been shown \cite{BoOw:09} that if the deterministic integrator is symplectic, then GLA  not only provides a good approximation of trajectories but also of BG (the invariant distribution). In this example, the deterministic building block is simplified Z\&Ss (Integrator \ref{SyLiPN3}) or SHAKE.

Constant temperature molecular dynamics is a rich research field, and our investigation will  only be numerical. The thermodynamic properties of a system with strong restraint may not be equivalent to those of a constrained system (e.g., \cite{BaFrLe11}); the Fixman potential is a classical way to correct the difference (see \cite{PeSkYa85} for debate on the validity of this correction). Proving that a numerical method samples a good approximation of the invariant distribution is nontrivial. \cite{BoOw:09} combines ergodicity  with the backward error analysis of symplectic integrators to show that the invariant distribution is preserved with a high order of accuracy. It is conjectured (Remark 2.1 in \cite{BoOw:09}) that SHAKE+GLA approximately samples a constrained BG distribution
\begin{equation}
    \hat{\pi}(q,p)=\hat{Z}^{-1} \exp(-\beta \mathcal{H}),
    \label{BGdistribution2}
\end{equation}
where $\hat{Z}=\int_{T^* g^{-1}(0)} \exp(-\beta \mathcal{H}) \, dq \, dp$.
Relating \eqref{BGdistribution} and \eqref{BGdistribution2} is left as a future investigation.
See \cite{VaCi2006, BaFrLe11, Ha08, LeRoSt12} for more about finite temperature constrained dynamics.


\paragraph{Numerical results:}
\begin{figure}[htb]
\includegraphics[width=\textwidth]{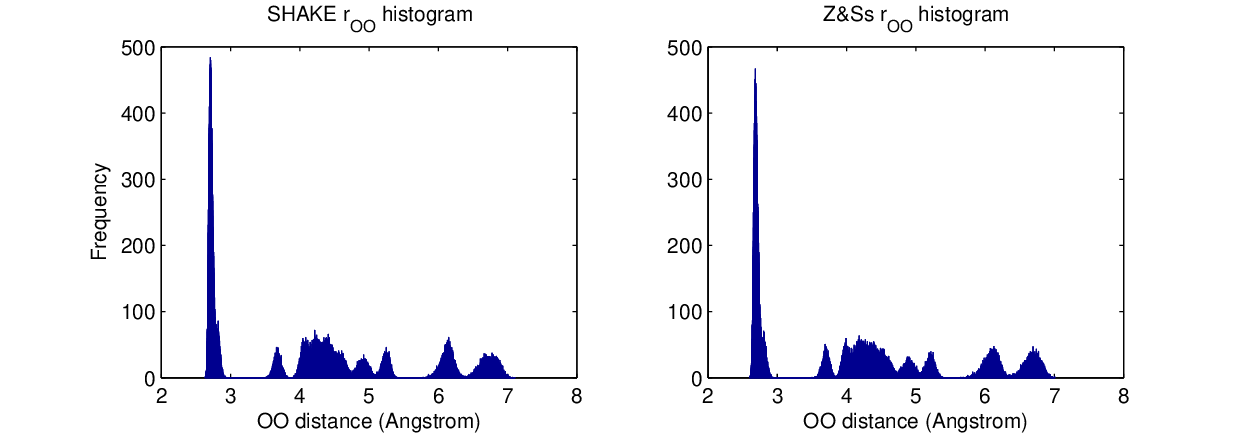}
\caption{\footnotesize Empirical OO radial distribution in 7-water cluster obtained by long time ($T=10000$) simulations of SHAKE and simplified Z\&Ss.}
\label{Water7OO}
\end{figure}

One quantity of interest in water cluster is the distribution of interatomic oxygen-oxygen distances in the thermal equilibrium limit, also known as the OO radial distribution \cite{JoCh83}. To illustrate the accuracy of Z\&S in sampling BG, Figure \ref{Water7OO} shows histograms obtained by long time simulations of SHAKE and simplified Z\&Ss (Integrator \eqref{SyLiPN3})
that approximate this distribution. We chose a system of size $N=7$  (i.e., $63$ degrees of freedom) so that peaks in the distribution could be clearly distinguished. SHAKE required 13472 secs, including 12284 secs on nonlinear solves (with tolerances of $10^{-6}$ on variable and $10^{-10}$ on function value), whereas simplified Z\&Ss used 1549 secs, including 67 secs on linear solves. Parameters are $\omega=20$, $h=0.05$ in both simulations, $\gamma=0.01$ and $\beta=50$.

\begin{figure}[htb]
\center
\includegraphics[width=0.75\textwidth]{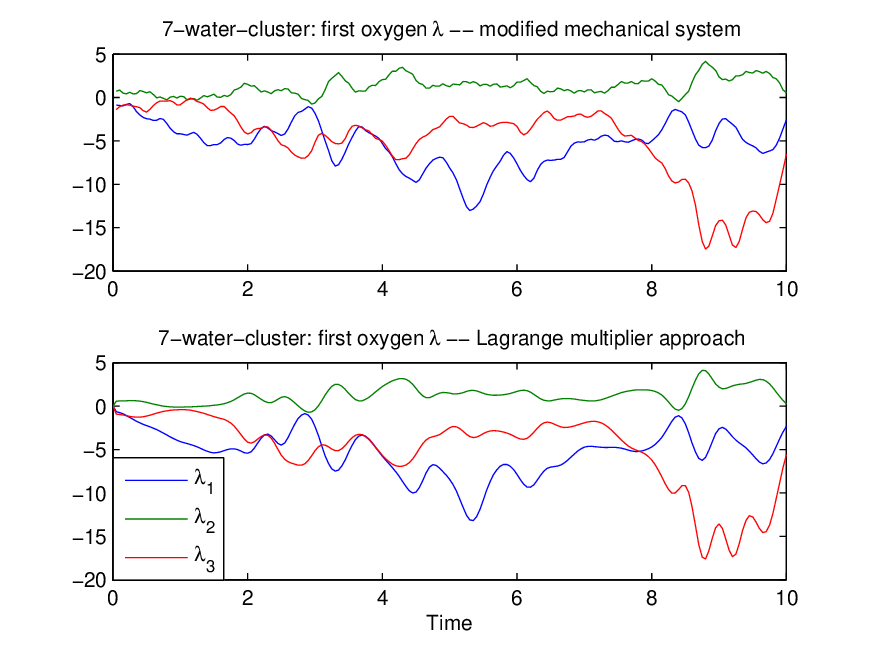}
\vspace{-6pt}
\caption{\footnotesize Lagrange multipliers from the penalized system (simplified Z\&Ss) and by SHAKE. Only 1st oxygen atom is shown, and illustration is terminated before chaos. Empirical average uses a time window of width 0.2. $\omega$ temporarily enlarged to $500$ for clearer visualization of details.}
\label{AsymptoticLambdaWater}
\end{figure}

\begin{figure}[htb]
\center
\includegraphics[width=0.8\textwidth]{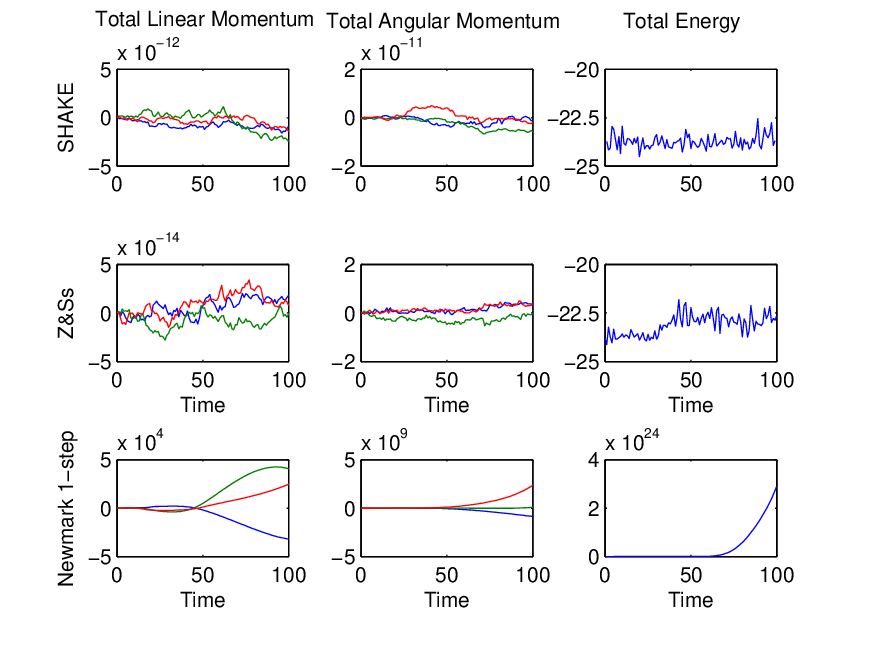}
\vspace{-8pt}
\caption{\footnotesize Energy, linear and angular momentum preservation by simplified Z\&Ss, SHAKE, and partially-solved Newmark. $h=0.05$ for all. For clarity plots are drawn with a 20:1 downsample rate.}
\label{Water7SyLiPNnearSymplecticity}
\end{figure}

We also provide two deterministic simulations (with noise and friction turned off; other parameters remains unchanged unless indicated otherwise): (i) Figure \ref{AsymptoticLambdaWater} compares Lagrange multipliers computed by SHAKE and from the penalized system to illustrate Theorem \ref{asymptoticLambdaTheorem}. (ii) Figure \ref{Water7SyLiPNnearSymplecticity} compares simplified Z\&Ss, SHAKE, and partially solved Newmark (non-symplectic) in terms of energy and momentum conservations. Simplified Z\&Ss lost symplecticity due to simplification, but it still exhibits improved preservation properties comparing to partially solved Newmark.

\begin{figure}[htb]
\includegraphics[width=\textwidth]{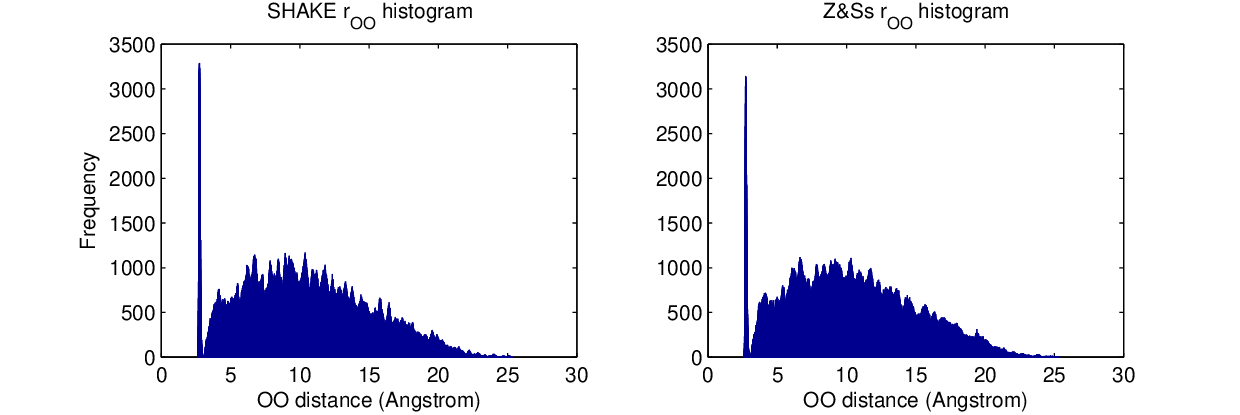}
\caption{\footnotesize Empirical OO radial distribution in 100-water cluster obtained by simulations of SHAKE and simplified Z\&Ss till $T=1000$.}
\label{Water100OO}
\end{figure}

To test scalability, we increase $N$ to $100$ ($900$ degrees of freedom) and illustrate results in Figure \ref{Water100OO}. SHAKE spent 42234 secs, including 14272 secs on solving nonlinear systems, and $\sim$28000 secs on computing $V$ and $\nabla V$, whereas simplified Z\&Ss spent 30059 secs, including 319 secs on solving linear systems, and $\sim$29000 secs on $V$, $\nabla V$ and $\text{Hess}\, V$.

\paragraph{Efficiency of force evaluation:} Although simplified Z\&Ss accelerates updates by linearization, for large systems the computational bottleneck is likely to be on force evaluations but not updates. Fortunately, significant progress has been made to accelerate force evaluations, such as the fast multipole method \cite{GrRo87}, or simply the idea of ignoring weak long-range forces. We did not employ any of them, but they can be used in adjunct to simplified Z\&Ss.

Times spent on force evaluations by SHAKE and simplified Z\&Ss are comparable; Hessian computations in simplified Z\&Ss didn't incur much overhead. This is because the potential is a function of relative distances $r_{ij}=\|x_i-x_j\|$. For such $f$,
\begin{equation}
    \frac{\partial^2 f(r)}{\partial x_i \partial x_j}=\frac{\partial r}{\partial x_i} \frac{\partial^2 f}{\partial r^2}\frac{\partial r}{\partial x_j}+\frac{\partial f}{\partial r} \frac{\partial^2 r}{\partial x_i \partial x_j},
    \label{afdjsn1gjlrnflqjadga}
\end{equation}
but $\frac{\partial r}{\partial x}$ and $\frac{\partial f}{\partial r}$ are already computed when calculating the gradient, $\frac{\partial^2 r}{\partial x_i \partial x_j}$ is cheap to obtain, and $\frac{\partial^2 f}{\partial r^2}$ is the only new component of computation but it is a scalar. In addition, nonlinear solver (e.g., Newton) in Lagrange multiplier or generalized coordinate methods requires the Hessian too because the equation to solve involves $\nabla V$.

The linear system associated with the Hessian can also be solved in $\mathcal{O}(N)$ time. This is because the Hessian is dominated by block diagonal due to localized stiff penalty terms in \eqref{pvqreuphp3410hrp1ubo}. Simplified Z\&Ss further reduces the Hessian to completely block-diagonal, and linear solves are executed molecule by molecule. Similar efficiency can be obtained for polymers as long as the number of bonds is at the same order as the number of atoms.




\subsection{Coarse time-stepping of a DNA model}
\label{SectionDNAexample}

We now show how Z\&S accelerates the simulation of an \textbf{unconstrained} multiscale system. Consider the simple DNA model proposed in \cite{Me06} and further studied, e.g., in \cite{DuMeMa09, KoOw12}. The displacement angle of the $k^\text{th}$ base in one strand, $\theta_k$, follows
\begin{equation}
    \ddot{\theta}_k=\theta_{k+1}-2\theta_{k}+\theta_{k-1}-\epsilon U'(\theta_k) ,
    \label{eq_DNA}
\end{equation}
where $U(\theta)=(\exp(-a [1-\cos(\theta)-x_0])-1)^2$ is a Morse potential modeling complementary base pairings between two DNA strands, and linear force models the tendency of alignment between neighboring bases. Unitless parameters are $a=7$, $x_0=0.3$, $\epsilon=1/1400$, and the number of base-pairs $N=200$ \cite{DuMeMa09}. Two stable configurations are given by minima of $U$ and correspond to closed double strands. Nonlinearity in this system is critical, for it leads to  transitions between metastable states that correspond to the opening of double strands. We simulate such transitions with initial positions $\theta_k=0.8+0.1\xi_k$ ($\xi_k$ i.i.d. standard norm), which is near a stable configuration, and initial momenta $\dot{\theta}_k=\cos(4\pi k/N)/\sqrt{N}$, which facilitates the opening-up \cite{DuMeMa09}.

\begin{figure}[htb]
\includegraphics[width=\textwidth]{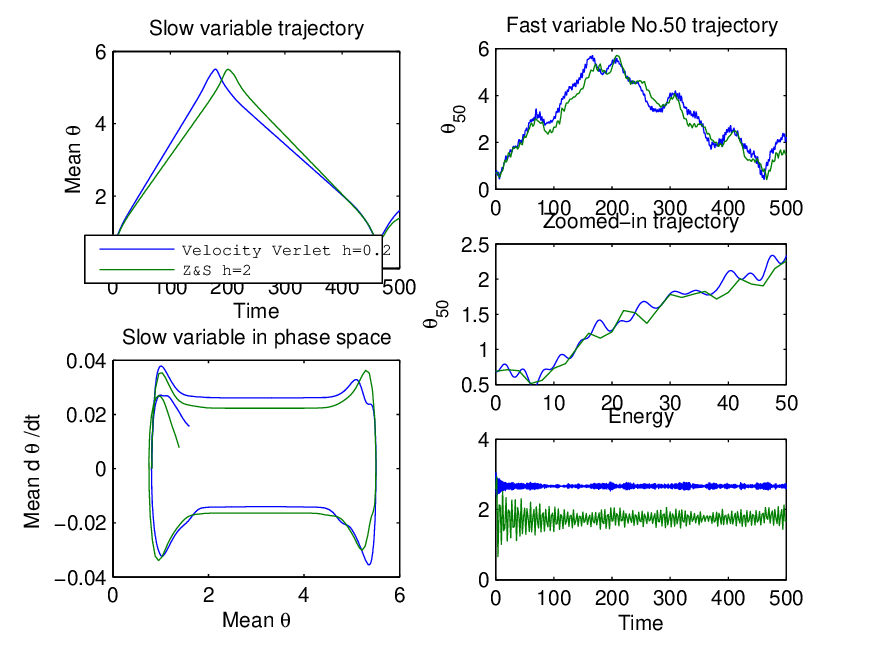}
\caption{\footnotesize DNA ($N=200$ base-pairs) conformational transitions by Z\&S and Velocity-Verlet.}
\label{fig_DNA}
\end{figure}

It is known \cite{DuMeMa09} that $\bar{\theta}=\sum \theta_k/N$ is a slow variable and can be used as a reaction parameter, whereas individual $\theta_k$'s are fast variables. Figure \ref{fig_DNA} presents simulations by Velocity-Verlet (benchmark) and Z\&S ($\beta=0.3$) in these variables. The phase portrait shows that the DNA transits between  meta-stable configurations $\theta=\text{arccos}(0.7) \approx 0.795$ and $\theta=2\pi-\text{arccos}(0.7) \approx 5.49$. Note these are long time simulations and the system is chaotic \cite{Me06}.

The Z\&S energy is lower than the benchmark because fast oscillations are damped by large time-steps.
Both $h=0.2$ in Velocity-Verlet and $h=2$ in Z\&S are near stability limits. The methods respectively used 23.26 and 3.24 seconds CPU time.


\subsection{Lie group integration}
\label{sec_LieGroup}
Formulating Z\&S as a variational principle allows us to generalize the method to mechanical systems on Lie groups.

Consider a prototypical  example of magnetized 3D rigid body with identity inertia matrix immersed in a constant magnetic field. The configuration space is $Q=\text{SO}(3)$. Denote by $R(t)\in Q$ the (generalized) rigid body position; in coordinates it is a $3\times 3$ matrix satisfying $R^T R=I$. Suppose when $R=I$ both the magnetic field and the dipole are in $z$-direction, then the potential energy can be written as $V(R)=\langle \mu R e_3, B e_3 \rangle=B\mu e_3^T R^{-1} e_3$, where $B$ and $\mu$ are field strength and dipole moment, and $e_3=\begin{bmatrix} 0 & 0 & 1 \end{bmatrix}^T$.

Let $\Omega(t)\in \mathbb{R}^3$ be convective angular velocity of the body, then the kinetic energy is $\frac{1}{2} \Omega^T \Omega$. Introduce an isomorphism between $\mathbb{R}^3$ and $\mathfrak{so}(3)$ (the Lie algebra of $\text{SO}(3)$) by
\[
    \Omega \mapsto \hat{\Omega}=\begin{bmatrix} 0 & -\Omega_3 & \Omega_2 \\ \Omega_3 & 0 & -\Omega_1 \\ -\Omega_2 & \Omega_1 & 0 \end{bmatrix}.
\]
Then $\dot{R}=R\hat{\Omega}$. It is known \cite{MaRa10} that dynamics of this mechanical system  can be obtained from either of the following equivalent variational principles:
\begin{itemize}
\item
    \begin{equation}
        \delta \int_0^T L(R,\dot{R}) dt=0 ,
        \label{eq_LieVarPrinciple}
    \end{equation}
    with arbitrary variations of $R(t)\in Q$.
\item
    \begin{equation}
        \delta \int_0^T l(R,\xi) dt=0 ,
        \label{eq_reducedLieVarPrinciple}
    \end{equation}
    with variations in the form $\delta\xi=\dot{\eta}+\text{ad}_\xi \eta$ under $R\in Q$ and $\xi=R^{-1}\dot{R}$.
\end{itemize}
For our system, $L(R,\dot{R})=\frac{1}{4} \text{tr}(\dot{R}^T\dot{R})-B\mu e_3^T R^{-1} e_3$ and $l(R,\hat{\Omega})=\frac{1}{2}\Omega^T \Omega-B\mu e_3^T R^{-1} e_3$ (note $\Omega^T \Omega = \frac{1}{2}\text{tr}(\hat{\Omega}^T\hat{\Omega})$).

We propose to simulate the system by modifying \eqref{eq_LieVarPrinciple}. The result is compared to a benchmark derived from \eqref{eq_reducedLieVarPrinciple} via Hamilton-Pontryagin princple, backward Variational Euler discretization, and Cayley approximation of the exponential map (see \cite{iserles2001cayley,Hairer06} for Cayley approximation, \cite{BoMa2009} for the benchmark method, and \cite{iserles2000lie,lee2007lie} for examples of other Lie group integrators). The benchmark uses update rules:
\begin{equation}
\begin{cases}
    R_{k+1} &= R_k \left(I-h \hat{\Omega}_{k+1}/2 \right)^{-1} \left(I+h \hat{\Omega}_{k+1}/2 \right) \\
    \hat{\Omega}_{k+1} &= \hat{\Omega}_k + \frac{h^2}{4} \left( \hat{\Omega}_{k+1}^T \hat{\Omega}_{k+1} \hat{\Omega}_{k+1}^T - \hat{\Omega}_{k}^T \hat{\Omega}_{k} \hat{\Omega}_{k}^T \right) + h R_{k}\frac{\partial l}{\partial R}(R_k,\hat{\Omega}_{k+1}).
\end{cases}
\label{eq_VarLie}
\end{equation}
Note $R$ is in a 3-dimensional manifold, and the differential in the last term shouldn't be computed as partial derivative with respect to 9 Cartesian coordinates of $R$, otherwise the last term won't be in $\mathfrak{so}(3)$. Instead, we follow \cite{holm1998euler} and obtain
\begin{equation}
    h R_{k}\frac{\partial l}{\partial R}(R_k,\hat{\Omega}_{k+1}) = h B\mu R_k\frac{-e_3^T R_k^{-1} e_3}{\partial R_k} = h B\mu {\left( (R_k^{-1} e_3) \times e_3 \right)}^\wedge.
    \label{eq_DerivativeOnLieGroup}
\end{equation}
\eqref{eq_VarLie} is variational and thus numerically energy and momentum preserving. Thanks to Cayley approximation, it also preserves the SO(3) structure in the sense that $R_k^T R_k=I$ up to arithmetic error. However, variational methods of this type are intrinsically nonlinearly implicit due to curved geometry when $Q$ is noncommutative (e.g., \cite{BoMa2009,kobilarov2009lie}).

Our goal is to avoid expensive nonlinear solves and bypass force evaluations that require geometric calculations (such as \eqref{eq_DerivativeOnLieGroup}). To do so, we first add penalization to \eqref{eq_LieVarPrinciple}:
\[
    \delta \int_0^T \frac{1}{4} \text{tr} (\dot{R}^T \dot{R}) - B\mu R(3,3) + \frac{1}{2}\omega^2 \text{tr}((R^T R-I)^T (R^T R-I)) dt=0,
\]
where $R\in \text{SO}(3)$ is relaxed to $R\in \mathbb{R}^{3\times 3}$. We then discretize the action as follows:
\begin{align}
    & \mathcal{L}_d(R_k,R_{k+1},a_k) = \text{tr} \left( \frac{1}{4} \left( \frac{R_{k+1}-R_k}{h} \right)^T \left( \frac{R_{k+1}-R_k}{h} \right) - \beta h^2 \frac{1}{2} a_k^T a_k \right) \nonumber\\
    & \qquad - B\mu e_3^T(R_k+\beta h^2 a_k)e_3 - \text{tr} \left( \frac{1}{2}\omega^2 \left(\left((R_k+\beta h^2 a_k)^T (R_k+\beta h^2 a_k)-I\right)^2 \right) \right)
    \label{eq_DiscreteAction1stOrderLieGroup}
\end{align}
Finally, we truncate terms that are higher than 2nd-order in $a_k$. After using trace identities $\text{tr}(A B)=\text{tr}(B A)$ and $\text{tr}(A^T)=\text{tr}(A)$, the truncated action simplifies to
\begin{align*}
    & \tilde{\mathcal{L}}_d(R_k,R_{k+1},a_k) = \text{tr} \left( \frac{1}{4} \left( \frac{R_{k+1}-R_k}{h} \right)^T \left( \frac{R_{k+1}-R_k}{h} \right) - \beta h^2 \frac{1}{2} a_k^T a_k \right) \\
    & \qquad - B\mu e_3^T(R_k+\beta h^2 a_k)e_3 - \text{tr} \Bigg( \frac{1}{2}\omega^2 \Big(
    (R_k^T R_k-I)^2 + 4\beta h^2 (R_k^T R_k R_k^T-R_k^T) a_k \\
    & \qquad\qquad\qquad\qquad + 2\beta^2 h^4 (a_k^T a_k R_k^T R_k + R_k^T a_k R_k^T a_k + a_k^T R_k R_k^T a_k - a_k^T a_k )
    \Big) \Bigg).
\end{align*}
Unconstrained variation of this action with respect to $a_k$ gives
\begin{equation}
    a_k = -B\mu e_3 e_3^T -2\omega^2 \left( R_k R_k^T R_k - R_k + \beta h^2 (a_k R_k^T R_k + R_k a_k^T R_k + R_k R_k^T a_k - a_k) \right).
    \label{eq_aInSO3}
\end{equation}
Standard variational integrator construction leads to
\[
    p_k = -D_1 \tilde{\mathcal{L}}_d (R_k,R_{k+1},a_k), \qquad
    p_{k+1} = D_2 \tilde{\mathcal{L}}_d (R_k,R_{k+1},a_k).
\]
Let $f_k = a_k + 2\omega^2 \beta^2 h^4 \left( a_k^T a_k R_k^T + a_k^T R_k a_k^T + R_k^T a_k a_k^T \right)$ and use \eqref{eq_aInSO3} for simplification, then the above becomes
\[ \begin{cases}
    p_{k+1} &= p_k + h f_k \\
    R_{k+1} &= R_k + 2h p_{k+1}
\end{cases}. \]
These are our variational linearized SO(3) integrator.
Note \eqref{eq_DiscreteAction1stOrderLieGroup} is based on a 1st-order quadrature; 2nd-order trapezoidal rule would lead to
\[ \begin{cases}
    p_{k+1/2} &= p_k + \frac{h}{2} f_k \\
    R_{k+1} &= R_k + 2h p_{k+1} \\
    p_{k+1} &= p_k + \frac{h}{2} f_{k+1}
\end{cases}. \]
These are similar to Z\&S updates (Integrator \ref{SyLiPN}) although Z\&S works in $\mathbb{R}^n$. Some may question the usefulness of a variational formulation, because one can vectorize $R$ into $9$-dimension, view the penalized system as Newton's equation, and then use Z\&S. In Z\&S updates \eqref{SyLiPNeq}, however, $V^{(3)}$ is essentially a 6-tensor, and its brute-force calculation in coordinates, as well as its contractions with $a$ from both left and right, will be unpleasant. A variational approach minimizes the involvement of coordinates and reduces the effort.

\begin{figure}
\vspace{-15pt}
\includegraphics[width=\textwidth]{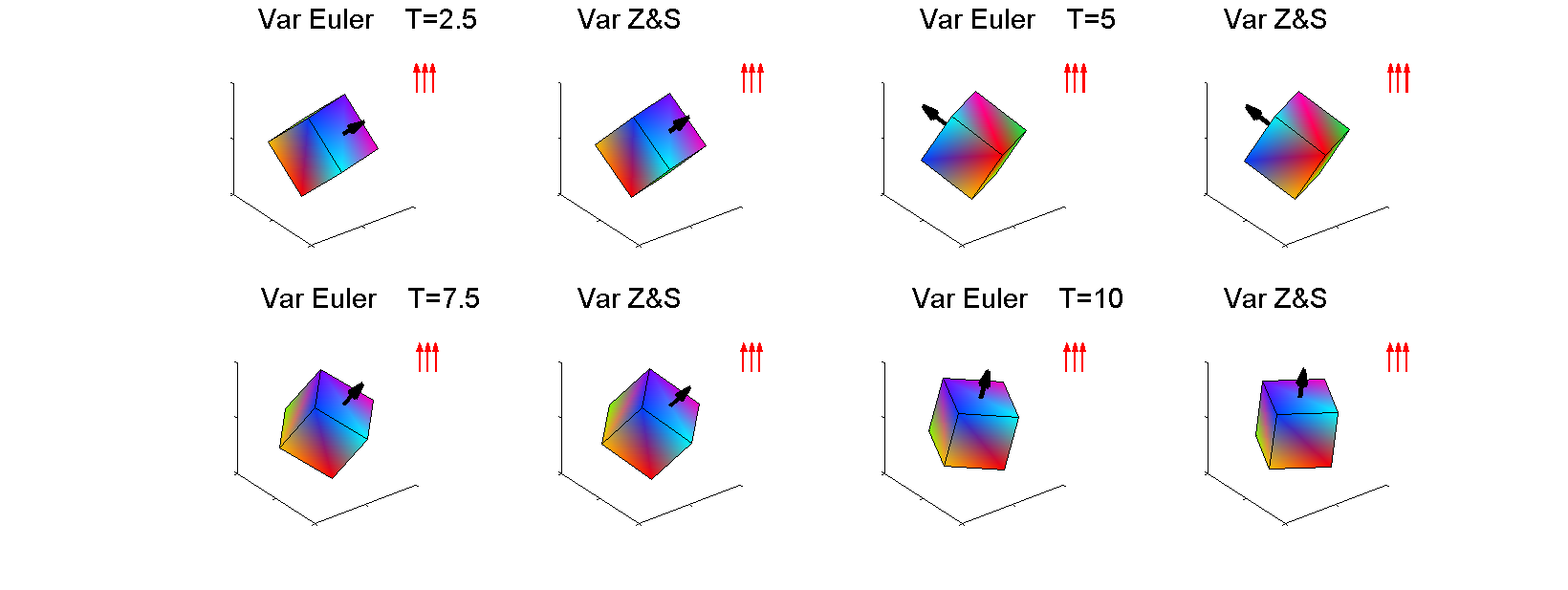}
\vspace{-20pt}
\caption{\footnotesize Snapshots of magnetized rigid body dynamics. Red and black arrows represent magnetic field and dipole.}
\label{fig_Lie_SyLiPN_hd1}
\end{figure}

Figure \ref{fig_Lie_SyLiPN_hd1} demonstrates benchmark ($h=0.0001$) and variational Z\&S simulations ($h=0.1$, $\omega=10$, $\beta=0.4$); their difference, regarded as our method's error, is quantified in Figure \ref{fig_Lie_SyLiPN_hd1_err}; error of the benchmark method with $h=0.1$ is also provided in Figure \ref{fig_Lie_VarEuler_hd1_err} as a comparison. The full simulation is available at \url{http://youtu.be/29deMRDRsuU}. Initial conditions are $R(0)=I$ and $\Omega(0)=[1;0.2;0.1]$.

Our method and variational Euler with both $h=0.1$ respectively spent 0.03 and 1.45 seconds on computations. However, variational Lie group integrator is much better at preserving the Lie group structure. Applicabilities of the two approaches are disjoint: for example, variational Z\&S generally suits computer graphics better, where real time rendering requires high efficiency, while demand on accuracy is moderate (as long as the result looks good); to orient satellites (e.g., \cite{junge2005optimal}), on the other hand, one should choose variational Lie group integrators over variational Z\&S, and it is worth CPU hours to precompute trajectories with high fidelity.

\begin{figure}
\centering
\footnotesize
\subfigure[Variational Z\&S ($h=0.1$)]{
\includegraphics[width=0.48\textwidth]{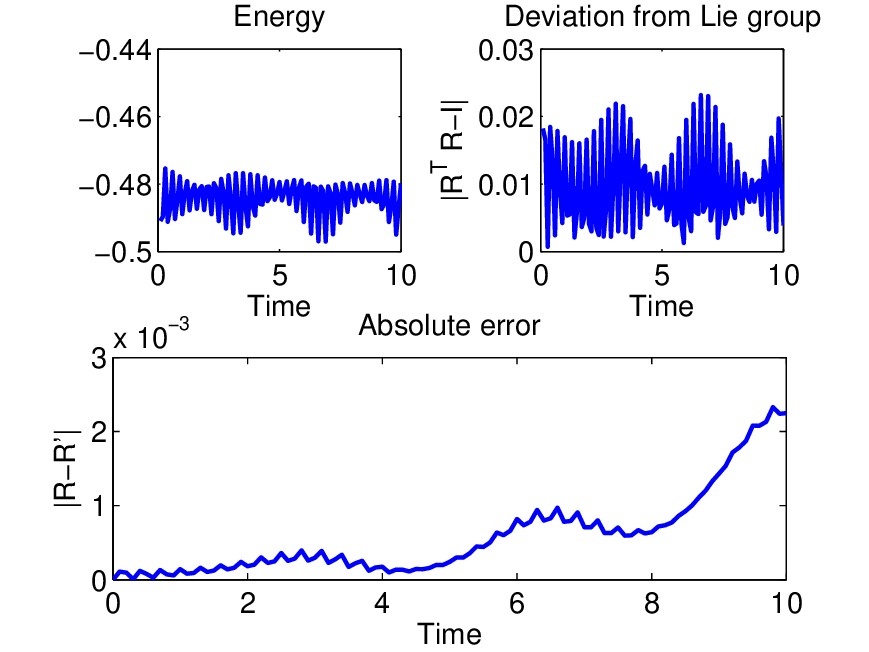}
\label{fig_Lie_SyLiPN_hd1_err}
}
\subfigure[Variational Euler on Lie group ($h=0.1$)]{
\includegraphics[width=0.48\textwidth]{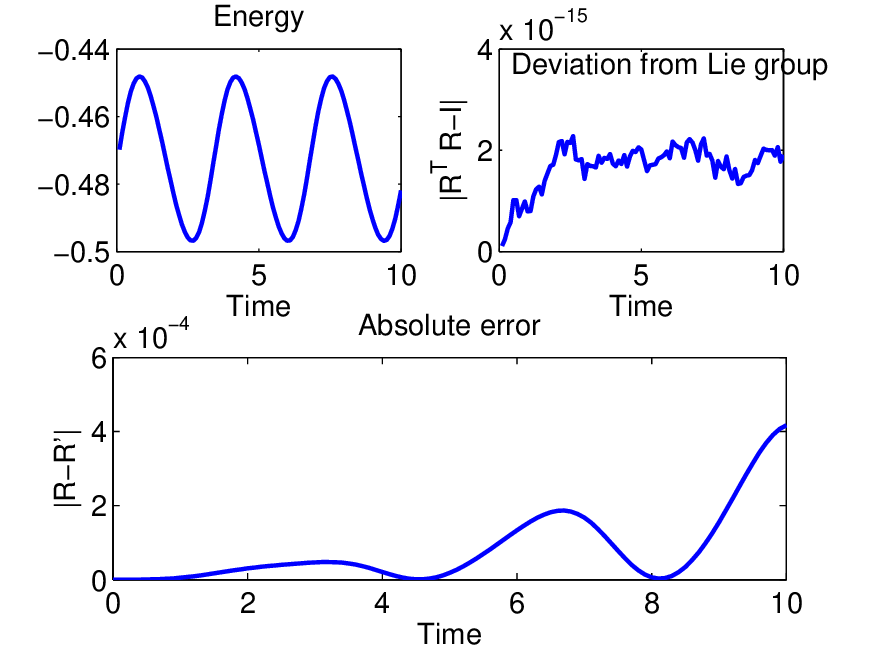}
\label{fig_Lie_VarEuler_hd1_err}
}
\vspace{-15pt}
\caption{\footnotesize Preservation of energy, Lie group structure, and deviation from benchmark trajectory $R'$.}
\end{figure}

\section{Acknowledgement}
This work was supported by NSF grant CMMI-092600, a generous gift from UTRC, and Courant Instructorship from New York University. We thank Mathieu Desbrun for motivation and discussions, Joel Tropp and Eitan Grinspun for discussions, Carmen Sirois for proofreading the manuscript, and anonymous reviewers for comments.

\bibliographystyle{siam}
\footnotesize
\bibliography{molei21}

\normalsize
\section{Appendix}
\begin{Lemma}
    Consider \eqref{ourConstrainedDynamics}. If $V$ is bounded from below, then there is a constant $C$, such that $\|g(q^\omega(s))\|\leq C/\omega$ for all $s$. Moreover, if $V(q)$ diverges to infinity as $|q|\rightarrow \infty$, then there is a constant $\tilde{C}$ such that $\|q^\omega(s)\|\leq \tilde{C}$.
    \label{Lemma1}
\end{Lemma}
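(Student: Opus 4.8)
The plan is to prove Lemma \ref{Lemma1} by exploiting the conservation of the modified energy along the solution of \eqref{ourConstrainedDynamics}. Let $E^\omega(t) := \frac{1}{2}\dot{q}^\omega(t)^T M \dot{q}^\omega(t) + V(q^\omega(t)) + \frac{1}{2}\omega^2 g(q^\omega(t))^T g(q^\omega(t))$. Since \eqref{ourConstrainedDynamics} is a mechanical system with potential $V(q) + \frac{1}{2}\omega^2 g(q)^T g(q)$, this energy is constant in time, so $E^\omega(t) = E^\omega(0)$ for all $t$. At $t=0$ we have $g(q_0) = 0$ by hypothesis, so the penalty term vanishes and $E^\omega(0) = \frac{1}{2}\dot{q}_0^T M \dot{q}_0 + V(q_0)$, which is a constant independent of $\omega$. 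Call it $E_0$.

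Next I would use the lower bound on $V$: let $V(\cdot) \geq -V_{\min}$ for some constant $V_{\min} \geq 0$. Then from $E^\omega(t) = E_0$ and the fact that the kinetic term $\frac{1}{2}\dot{q}^\omega{}^T M \dot{q}^\omega$ is nonnegative (here I use $M$ positive definite, or at least positive semidefinite; if $M$ is merely non-singular one reduces to the positive-definite case after noting the kinetic energy is a conserved-sign quantity under the flow, but in the mechanical setting $M$ should be taken positive definite), we get
\begin{equation}
\tfrac{1}{2}\omega^2 \|g(q^\omega(t))\|^2 = E_0 - \tfrac{1}{2}\dot{q}^\omega(t)^T M \dot{q}^\omega(t) - V(q^\omega(t)) \leq E_0 + V_{\min}.
\end{equation}
Hence $\|g(q^\omega(t))\|^2 \leq 2(E_0 + V_{\min})/\omega^2$, giving $\|g(q^\omega(t))\| \leq C/\omega$ with $C := \sqrt{2(E_0 + V_{\min})}$ independent of $\omega$ and of $s$. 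This establishes the first claim.

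For the boundedness of $q^\omega$ itself, I would again use $E^\omega(t) = E_0$, now dropping the nonnegative kinetic and penalty terms to obtain $V(q^\omega(t)) \leq E_0$. Since $V(q) \to \infty$ as $|q| \to \infty$, the sublevel set $\{q : V(q) \leq E_0\}$ is bounded; calling its diameter-type bound $\tilde{C}$ gives $\|q^\omega(t)\| \leq \tilde{C}$ for all $t$ and all $\omega$. I would also remark that this a posteriori confines the trajectory to a fixed compact set, so the hypotheses that only need to hold "in the domain containing the solution" (e.g. the lower bound on $V$, the boundedness of derivatives) are automatically in force there, and solutions exist globally in time.

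The main obstacle here is essentially bookkeeping rather than conceptual: one must be careful that the constants genuinely do not depend on $\omega$, which hinges entirely on the initial condition $g(q_0) = 0$ making $E^\omega(0)$ $\omega$-independent — without that, the penalty term at $t=0$ would blow up like $\omega^2$ and the estimate would collapse. A secondary point is justifying that $E^\omega$ is conserved and the solution exists for all $t \geq 0$; this follows from standard ODE theory once one observes the trajectory cannot escape the compact sublevel set of $V$ (so no finite-time blowup), but it is worth stating explicitly since the argument is mildly circular (energy conservation gives the a priori bound, which gives global existence, which justifies applying energy conservation on all of $[0,\infty)$) and should be unwound by a continuation/bootstrap argument on a maximal interval of existence.
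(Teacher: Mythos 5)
Your proposal is correct and follows essentially the same route as the paper's proof: conservation of the modified energy, whose initial value is $\omega$-independent because $g(q_0)=0$, combined with the lower bound on $V$ and nonnegativity of the kinetic and penalty terms to get $\|g(q^\omega)\|\leq C/\omega$, and then the upper bound $V(q^\omega)\leq E_0$ together with coercivity of $V$ to confine $q^\omega$ to a bounded sublevel set. Your extra remarks on the $\omega$-independence of the constants and the continuation argument for global existence are just a more explicit rendering of the same energy argument the paper gives.
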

\begin{proof}
    Note the energy $[\dot{q}^\omega]^T M\dot{q}^\omega/2+V(q^\omega)+\omega^2 g(q^\omega)^T g(q^\omega)$ in the penalized system \eqref{ourConstrainedDynamics} is conserved and determined by initial condition. Therefore, $V(\cdot)$ being bounded from below and $g^T g\geq 0$ imply that $\omega^2 g(q^\omega)^T g(q^\omega)=\mathcal{O}(1)$. Hence $g(q^\omega(s))=\mathcal{O}(1/\omega)$.

    By a similar energy argument, since $[\dot{q}^\omega]^T M\dot{q}^\omega/2\geq 0$ and $g(q^\omega)^T g(q^\omega)\geq 0$, $V(q^\omega)$ is bounded from above too, which implies that $q^\omega$ remains bounded.
\end{proof}

\begin{Lemma}
    Consider the solution to a conserved mechanical system
    \begin{equation}
    \begin{cases}
        \ddot{x}^\omega=f_1(x^\omega,y^\omega) \\
        \ddot{y}^\omega=f_2(x^\omega,y^\omega)-\omega^2 g(y^\omega)^T \nabla g(y^\omega)
        \label{gh1orh3b1o4t143t1}
    \end{cases},
    \end{equation}
    where $x^\omega$ and $y^\omega$ are vectors, and $x^\omega(0)=x_0$, $\dot{x}^\omega(0)=\dot{x}_0$, $y^\omega(0)=y_0$, $\dot{y}^\omega(0)=\dot{y}_0$.
    Suppose $f_1$, $f_2$ and $\nabla g$ are $C^1$ with bounded derivatives, $x^\omega$ and $y^\omega$ are bounded, $g(y_0)=0$ and $\frac{d}{dt}g(y_0)=0$, and $g(\cdot)$ has a non-degenerate Jacobian in a neighborhood of $y_0$, then
    \begin{equation}
        \lambda(t):=-\lim_{T\rightarrow 0} \lim_{\omega\rightarrow \infty} \frac{1}{T}\int_t^{t+T} \omega^2 g(y^\omega(s))\, ds
        \label{asymptoticLambdaFlat}
    \end{equation}
    exists and is finite. Denote by $x(t)$, $y(t)$ the solution to
    \begin{equation}
    \begin{cases}
        \ddot{x}=f_1(x,y) \\
        \ddot{y}=f_2(x,y)+\lambda^T \nabla g(y) \\
        g(y)=0
    \end{cases}
    \end{equation}
    with the same initial conditions $x_0,\dot{x}_0,y_0,\dot{y}_0$, then as $\omega\rightarrow\infty$,
    \begin{equation}
    \begin{cases}
        x^\omega \rightarrow x \\
        y^\omega \xrightarrow{F} y \\
        g(y^\omega) \rightarrow 0
    \end{cases}
    \end{equation}
    \label{Lemma2}
\end{Lemma}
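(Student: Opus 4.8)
\emph{Proof plan.} The plan is to treat $\sigma^\omega:=g(y^\omega)$ as a fast variable obeying a forced harmonic oscillator of stiffness $\Theta(\omega^2)$, to use the consistency of the initial data ($g(y_0)=0$, $\tfrac{d}{dt}g(y_0)=0$) to keep that oscillator ``cold'', and then to pass to the limit in the twice-integrated equations of motion, the penalty force becoming, after one time-average, the constraint force $\lambda^{T}\nabla g$. The only serious difficulty will be the coldness estimate; everything else is soft analysis (energy bounds, Arzel\`a--Ascoli, one integration by parts).

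First I would collect uniform bounds. The energy-conservation argument of Lemma~\ref{Lemma1} applies essentially verbatim to \eqref{gh1orh3b1o4t143t1}: since $g(y_0)=0$ the conserved energy is $O(1)$ (its potential part being bounded because $x^\omega,y^\omega$ are), so $\tfrac{\omega^{2}}{2}|g(y^\omega)|^{2}=O(1)$ and $|\dot x^\omega|,|\dot y^\omega|=O(1)$ uniformly in $\omega$. Hence $|g(y^\omega)(t)|=O(1/\omega)$ uniformly, and for large $\omega$ the trajectory $y^\omega$ stays in the neighbourhood of $g^{-1}(0)$ on which $\nabla g$ has full rank, so that $B(y):=\nabla g(y)\nabla g(y)^{T}$ is uniformly positive definite there. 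Differentiating $\sigma^\omega=g(y^\omega)$ twice and inserting the $y$-equation of \eqref{gh1orh3b1o4t143t1},
\[
\ddot\sigma^\omega=-\omega^{2}B(y^\omega)\sigma^\omega+R^\omega,\qquad R^\omega:=\nabla g(y^\omega)f_{2}(x^\omega,y^\omega)+\mathrm{Hess}\,g(y^\omega)[\dot y^\omega,\dot y^\omega],
\]
with $R^\omega$ bounded and, crucially, $\sigma^\omega(0)=0$, $\dot\sigma^\omega(0)=\nabla g(y_0)\dot y_0=\tfrac{d}{dt}g(y_0)=0$.

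The crux is to show the oscillator is cold, i.e. $\sup_{t}|\sigma^\omega|=O(\omega^{-2})$ and $\sup_{t}|\dot\sigma^\omega|=O(\omega^{-1})$. I would split $\sigma^\omega=\sigma^\omega_{s}+\rho^\omega$ with slow part $\sigma^\omega_{s}:=\omega^{-2}B(y^\omega)^{-1}R^\omega=O(\omega^{-2})$; the remainder solves $\ddot\rho^\omega+\omega^{2}B(y^\omega)\rho^\omega=-\ddot\sigma^\omega_{s}$ with $\rho^\omega(0)=-\sigma^\omega_{s}(0)$, $\dot\rho^\omega(0)=-\dot\sigma^\omega_{s}(0)$, and a Gronwall estimate on the oscillator energy $\tfrac12|\dot\rho^\omega|^{2}+\tfrac{\omega^{2}}{2}(\rho^\omega)^{T}B(y^\omega)\rho^\omega$ — using that $B(y^\omega(\cdot))$ is uniformly positive definite and Lipschitz in $t$ (because $\dot y^\omega$ is bounded) — gives the claim, provided $\dot\sigma^\omega_{s}(0)$ and $\ddot\sigma^\omega_{s}$ are $O(\omega^{-2})$. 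Since these involve $\ddot y^\omega$, which is only $O(\omega)$ a priori, a short bootstrap is needed: run the same energy estimate first with the crude data to get $\sigma^\omega=O(\omega^{-1})$, $\dot\sigma^\omega=O(1)$, then re-estimate. The payoff is that the penalty force $\omega^{2}\nabla g(y^\omega)^{T}g(y^\omega)=\nabla g(y^\omega)^{T}B(y^\omega)^{-1}(R^\omega-\ddot\sigma^\omega)$ is uniformly bounded, hence so are $\ddot y^\omega$ and $\ddot x^\omega=f_{1}(x^\omega,y^\omega)$.

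Finally I would run the compactness argument. By Arzel\`a--Ascoli a subsequence has $x^\omega\to\bar x$, $\dot x^\omega\to\dot{\bar x}$, $y^\omega\to\bar y$ uniformly on bounded intervals; writing $\ddot y^\omega=f_{2}(x^\omega,y^\omega)-\nabla g(y^\omega)^{T}B(y^\omega)^{-1}R^\omega+\nabla g(y^\omega)^{T}B(y^\omega)^{-1}\ddot\sigma^\omega$ and integrating once, the last term integrates by parts to something $O(1/\omega)$ (its boundary terms carry the factor $\dot\sigma^\omega=O(1/\omega)$, and $\dot\sigma^\omega(0)=0$), so $\dot y^\omega\to\dot{\bar y}$ uniformly too; in particular $g(\bar y)\equiv0$, $\nabla g(\bar y)\dot{\bar y}\equiv0$, and $R^\omega\to R$ uniformly. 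Integrating twice and letting $\omega\to\infty$, $(\bar x,\bar y)\in C^{2}$ solves $\ddot{\bar x}=f_{1}(\bar x,\bar y)$ and $\ddot{\bar y}=f_{2}(\bar x,\bar y)-\nabla g(\bar y)^{T}B(\bar y)^{-1}R$, which is the limiting DAE of \eqref{gh1orh3b1o4t143t1} once $\lambda(t):=-B(\bar y)^{-1}R$ is identified as the multiplier enforcing $g\equiv0$. Moreover, from $\omega^{2}g(y^\omega)=B(y^\omega)^{-1}(R^\omega-\ddot\sigma^\omega)$ and the same integration by parts,
\[
\lim_{\omega\to\infty}\frac1T\int_{t}^{t+T}\omega^{2}g(y^\omega(s))\,ds=\frac1T\int_{t}^{t+T}B(\bar y(s))^{-1}R(s)\,ds=-\frac1T\int_{t}^{t+T}\lambda(s)\,ds,
\]
and $T\to0$ recovers $\lambda(t)$, so \eqref{asymptoticLambdaFlat} exists and is finite. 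The DAE with consistent data has a unique solution (it reduces to the explicit second-order ODE above, tangent to $g^{-1}(0)$ and, under the stated regularity, locally Lipschitz, with global existence as in Lemma~\ref{Lemma1}), so the limit is subsequence-independent and the whole family converges; $y^\omega\to\bar y=y$ uniformly yields $y^\omega\xrightarrow{F}y$, and $x^\omega\to x$, $g(y^\omega)\to0$ follow. I expect the coldness estimate $\sigma^\omega=O(\omega^{-2})$ to be the bottleneck — it is sharper than the $O(\omega^{-1})$ that energy conservation gives for free and is exactly what makes the penalty force bounded and the limit a classical ODE, and it forces one to handle an oscillator with a time-dependent, merely Lipschitz stiffness matrix whose forcing is coupled to the large quantity $\ddot y^\omega$ — whereas the time-average in \eqref{asymptoticLambdaFlat} is unavoidable only because $\omega^{2}g(y^\omega)(t)$ retains a non-decaying $O(1)$ oscillation (of $\cos(\omega t)$ type) that disappears solely upon averaging.
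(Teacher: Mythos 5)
Your overall architecture (treating $\sigma^\omega:=g(y^\omega)$ as the fast variable, identifying $\lambda=-B^{-1}R$ with $B=\nabla g\,\nabla g^T$ and $R=\nabla g\,f_2+\mathrm{Hess}\,g[\dot y,\dot y]$, then compactness plus passage to the limit) is sound and genuinely different from the paper's route, but the step you yourself flag as the crux — the coldness estimate $\sigma^\omega=\mathcal{O}(\omega^{-2})$, $\dot\sigma^\omega=\mathcal{O}(\omega^{-1})$ — does not follow from the bootstrap you describe. Your forcing is $F=-\ddot\sigma^\omega_s$ with $\sigma^\omega_s=\omega^{-2}B(y^\omega)^{-1}R^\omega$, and $\tfrac{d^2}{dt^2}\bigl(B^{-1}R^\omega\bigr)$ contains the terms $2\,\mathrm{Hess}\,g(y^\omega)[\ddot y^\omega,\ddot y^\omega]$ and $2\,\mathrm{Hess}\,g(y^\omega)[\dddot y^\omega,\dot y^\omega]$. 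Under the ``crude'' first-pass information (which is exactly what energy conservation already gives: $\sigma^\omega=\mathcal{O}(\omega^{-1})$, $\dot\sigma^\omega=\mathcal{O}(1)$) one only has $\ddot y^\omega=\mathcal{O}(\omega)$ and $\dddot y^\omega=\mathcal{O}(\omega^{2})$, so $\ddot\sigma^\omega_s=\mathcal{O}(1)$, not $\mathcal{O}(\omega^{-1})$ (let alone the $\mathcal{O}(\omega^{-2})$ you ask for). Feeding $\int_0^t|F|=\mathcal{O}(1)$ into your Gronwall energy estimate returns $|\rho^\omega|=\mathcal{O}(\omega^{-1})$, $|\dot\rho^\omega|=\mathcal{O}(1)$ — i.e.\ exactly the input — so re-estimating gains nothing and the iteration stalls at $\mathcal{O}(\omega^{-1})$. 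Since the uniform boundedness of the penalty force, the equicontinuity of $\dot y^\omega$, and the vanishing of the boundary terms in your integrations by parts all hinge on this sharper estimate, this is a genuine gap, not a routine detail. (There are also two lesser issues: $\ddot\sigma^\omega_s$ involves four derivatives of $g$ and two of $f_2$, beyond the stated $C^1$ hypotheses on $\nabla g$ and $f_2$; and your derivation of $\dot y^\omega\to\dot{\bar y}$ from the integrated equation is circular because $R^\omega$ contains $\dot y^\omega$ — harmless, since once $\ddot y^\omega$ is bounded you can simply apply Arzel\`a--Ascoli to $\dot y^\omega$ as well.)

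The estimate itself is true, but to get it you must exploit the oscillation rather than bound $F$ in $L^1$: either recognize that the large parts of $\ddot\sigma^\omega_s$ are themselves proportional to $\omega^2$ times (linear or quadratic expressions in) $\sigma^\omega,\dot\sigma^\omega$ and absorb them by a Gronwall argument in a Duhamel (variation-of-constants) representation, gaining the extra $1/\omega$ by integrating the oscillatory kernel by parts, or use an adiabatic-invariant/WKB transformation of the slowly-modulated oscillator. This is essentially where the paper goes a different way: it never forms your $\sigma$-equation, but uses Lemma \ref{Lemma1} and the invertibility of $\nabla g$ to get $y^\omega-y_0=\mathcal{O}(1/\omega)$, Taylor-expands the constraining force around $y_0$ so the fast dynamics is an exactly linear forced oscillator plus $\mathcal{O}(1)$ terms, obtains the $\mathcal{O}(\omega^{-2})$ bound from the explicit Duhamel integral by integration by parts (citing Lemma 3.8 of the companion paper), and then invokes the two-scale averaging theorem (Theorem 1.2 of the FLAVOR paper) to define $\lambda$ via local ergodicity and conclude F-convergence. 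Your route, if the coldness step is repaired, is more self-contained and even delivers strong convergence of $y^\omega$ together with the explicit index-reduction formula $\lambda=-B^{-1}R$; the paper's route avoids the delicate self-coupled estimate at the price of leaning on the cited averaging machinery.
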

\begin{proof}
    We employ the multiscale averaging framework described in \cite{FLAVOR10} to demonstrate the convergence. Here $x^\omega$ is a slow variable and its evolution corresponds to the constrained dynamics. $y^\omega$ is a fast variable corresponding to a fluctuating deviation from the constraint manifold at a characteristic timescale of $\mathcal{O}(1/\omega)$, and it lies in the normal bundle of the constraint manifold.

    First, consider the linear constraint case in which $g(y)=C y^T$ for some non-singular $C$ (the affine case can be similarly treated by shifting $y$). $y$ dynamics is governed by
    \begin{equation}
        \ddot{y}^\omega=f_2(x^\omega,y^\omega)-\omega^2 y^\omega C^TC .
        \label{g8o173obui1bf}
    \end{equation}
    This is a forced harmonic oscillator, and its solution can be written as:
    \begin{equation}
        y^\omega(t)=\int_0^t f_2(x^\omega(s),y^\omega(s)) \sin(\omega \tilde{C} s) \tilde{C}^{-1}/\omega \, ds ,
        \label{ourgibq31ovo1}
    \end{equation}
    where $\tilde{C}=\sqrt{C^T C}$ is the well-defined matrix square root, and matrix sin is defined either by Taylor expansion or diagonalization. Note there is no propagation of initial condition because $y^\omega(0)=0$.

    It can be shown from \eqref{ourgibq31ovo1} (for instance, by Lemma 3.8 in \cite{SIM2}; the idea is that an addition $1/\omega$ comes from the $\sin$ due to integration by parts) that $y^\omega(t)$ is $\mathcal{O}(\omega^{-2})$ at least up to $t=o(1)$, and $y^\omega$ is asymptotically periodic (because \eqref{g8o173obui1bf} is asymptotically linear) and hence locally ergodic on energy shell (with Dirac ergodic measure).

    Since $y^\omega$ is locally ergodic on energy shell, \eqref{asymptoticLambdaFlat} well-defines $\lambda$, and Theorem 1.2 in \cite{FLAVOR10} guarantees that the effective equation for \eqref{g8o173obui1bf} is
    \begin{equation}
        \ddot{y}=f_2(x^\omega,y)+\lambda^T C ,
    \end{equation}
    in the sense that $y^\omega\xrightarrow{F} y$ and $x^\omega\rightarrow x$. Notice that the convergence on $x$ is in the strong sense, i.e., $\lim_{\omega\rightarrow \infty} x^\omega(t)\rightarrow x(t)$ for all bounded $t>0$. This is because $x$ is purely slow, for which case, F-convergence implies strong convergence.

    Now consider a fully nonlinear $g(\cdot)$ with a non-degenerate Jacobian. Lemma \ref{Lemma1} gives that $g(y^\omega)=\mathcal{O}(1/\omega)$. Since $y^\omega$ is by assumption bounded, inverting $g$ leads to $y^\omega-y_0=\mathcal{O}(1/\omega)$. Consequently, the dynamics of $y^\omega$ approaches that of a forced oscillator (with equilibrium at $y_0$) at a $\mathcal{O}(1/\omega)$ timescale, because $g(\cdot)$ is approximated by its first order Taylor expansion:
    \begin{align*}
        \ddot{y^\omega}&=f_2(x^\omega,y^\omega)-\omega^2 g(y^\omega)^T \nabla g(y^\omega) \nonumber\\
        &=
        f_2(x^\omega,y^\omega)-\omega^2 (\nabla g(y_0) (y^\omega-y_0)^T+\mathcal{O}(\omega^{-2}))^T (\nabla g(y_0) + \text{Hess}\, g(y_0) (y^\omega-y_0)^T+\mathcal{O}(\omega^{-2})) \nonumber\\
        &=
        f_2(x^\omega,y^\omega)-\omega^2 (y^\omega-y_0) \nabla g(y_0)^T \nabla g(y_0) + \mathcal{O}(1),
    \end{align*}
    where nonlinearity $f_2+\mathcal{O}(1)$ again manifests as a slow force, which is dominated by the linear term that leads to asymptotically periodic oscillations. Hence, similar to the linear case, $y^\omega$ is locally ergodic on energy shell, the Lagrange multiplier $\lambda$ is well-defined, and the solution $x^\omega$, $y^\omega$ F-converges to the effective solution $x$, $y$.
\end{proof}

\begin{figure}[htb]
\vspace{-11pt}
\center
\includegraphics[width=0.8\textwidth]{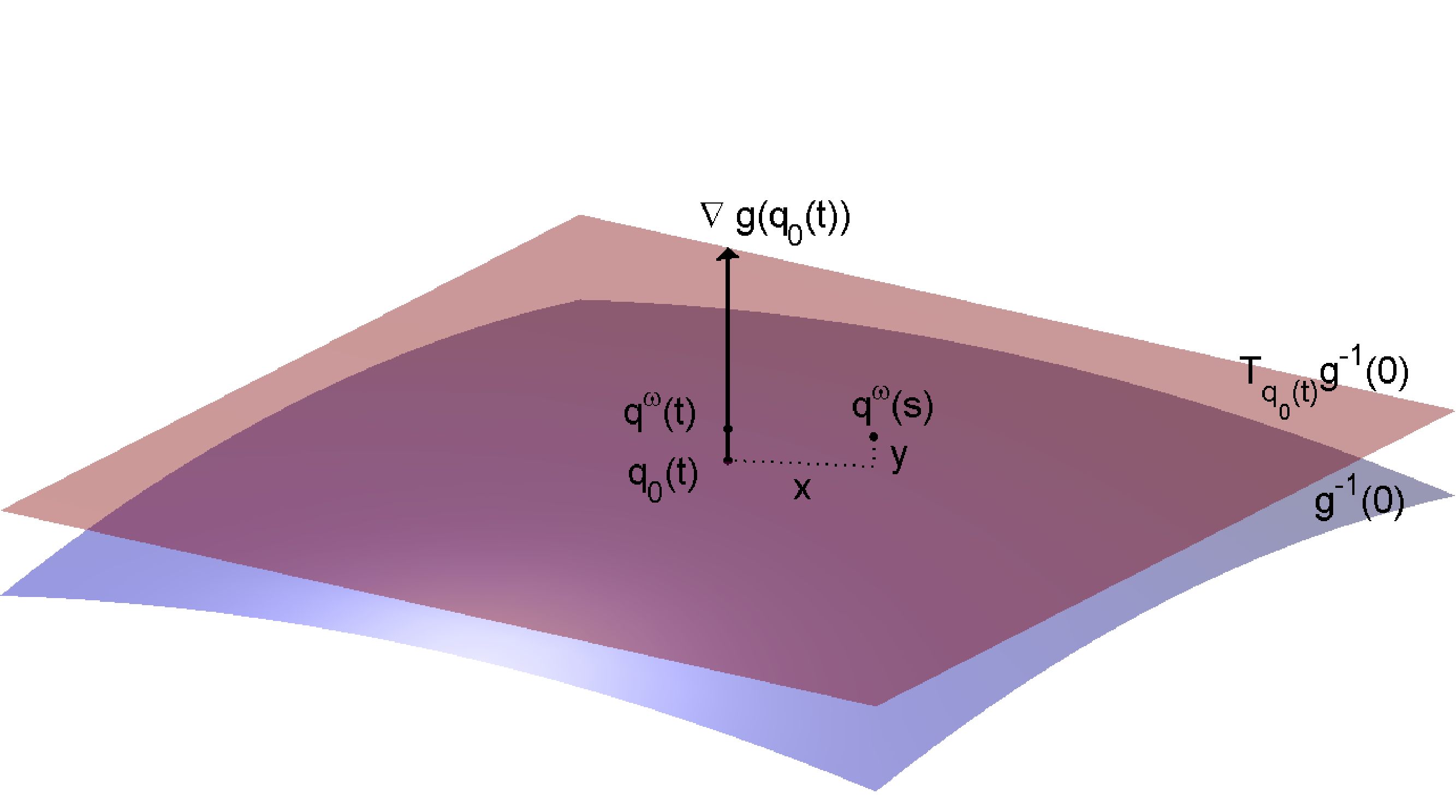}
\vspace{-15pt}
\caption{\footnotesize Multiscale geometry of penalized constrained dynamics -- $x$ and $y$ are slow and fast.}
\label{asymptoticLambdaTheoremFigure}
\end{figure}

\smallskip
\noindent \emph{Sketch of the proof of Theorem \ref{asymptoticLambdaTheorem}.} (Figure \ref{asymptoticLambdaTheoremFigure} illustrates the notations used in the proof to help understand the geometry.)
Since $g(q^\omega)$ is at most $\mathcal{O}(1/\omega)$ (Lemma \ref{Lemma1}), $q^\omega$ is close to the constraint manifold $g^{-1}(0)$ in the sense that if we define for all $t$:
\begin{equation}
    q_0(t):=\min_{q\in g^{-1}(0)} \|q-q^\omega(t)\|
\end{equation}
then $q^\omega(t)-q_0(t)=\mathcal{O}(1/\omega)$. Indeed, given that $\nabla g$ has the maximum rank, $\nabla g(q_0(t))$ spans the normal section (i.e., the subspace perpendicular to the tangent subspace) of the constraint manifold, in which $q^\omega(t)-q_0(t)$ also lies. Moreover, $g$ restricted to each normal section is an isomorphism, and both the restricted map and its inverse have bounded norms due to the boundedness of $q^\omega$ (i.e., compactness of the solution space) --- this is why $g(q^\omega)=\mathcal{O}(1/\omega)$ implies $q^\omega(t)-q_0(t)=\mathcal{O}(1/\omega)$.

The idea is that since $q^\omega$ is close enough, the constraint manifold can be locally viewed as a flat subspace, and F-convergence for this case has been proved in Lemma \ref{Lemma2}. More precisely, there exists a linear isomorphism $A_{q_0(t)}$, such that
\begin{equation}
    A_{q_0(t)} (q^\omega(t)-q_0(t))=\begin{bmatrix} 0 \\ y \end{bmatrix}
\end{equation}
where $y$ is a vector with codimension of $g^{-1}(0)$ and $0$ is a null vector.

For $q^\omega(s)$ with $s-t=\mathcal{O}(1/\omega)$, we will have a full-dimensional representation:
\begin{equation}
    A_{q_0(t)} (q^\omega(s)-q_0(t))=\begin{bmatrix} x \\ y \end{bmatrix},
\end{equation}
and $x$ and $y$ will respectively be the slow and fast variables, representing the constrained dynamics and fluctuations away from the constraint manifold (analogous to Lemma \ref{Lemma2}). This is because
\begin{align}
    \frac{d^2}{ds^2}\begin{bmatrix} x \\ y \end{bmatrix} &= A_{q_0(t)}(-\nabla V(q^\omega(s))-\omega^2 g(q^\omega(s))\nabla g(q^\omega(s))) \nonumber\\
    &= \begin{bmatrix} f_1(x,y) \\ f_2(x,y) \end{bmatrix} + \begin{bmatrix} \mathcal{O}(1) \\ -\omega^2 \tilde{g}(y)\nabla \tilde{g}(y) + \mathcal{O}(1) \end{bmatrix} ,
    \label{alsghqreogqhoo847}
\end{align}
where $f_1$ and $f_2$ are defined as $A_{q_0(t)}(-\nabla V(q^\omega(s))$. The $\mathcal{O}(1)$ in the 1st row of the right hand side of \eqref{alsghqreogqhoo847} is because $A_{q_0(t)}$ rotates the normal section to the y-direction, i.e.,
\begin{equation}
    A_{q_0(t)}\nabla g(q^\omega(s))=A_{q_0(t)}(\nabla g(q_0)+\mathcal{O}(1/\omega))=\begin{bmatrix} 0 \\ * \end{bmatrix} +\mathcal{O}(1/\omega) ,
\end{equation}
where $*$ is some non-zero expression, and certainly $\mathcal{O}(1/\omega)=\mathcal{O}(1)$.

The $\mathcal{O}(1)$ in the 2nd row of the right hand side of  \eqref{alsghqreogqhoo847} can also be intuitively obtained by using an analogous geometric argument, together with Taylor expansion.

Since \eqref{alsghqreogqhoo847} corresponds to the locally flat system \eqref{gh1orh3b1o4t143t1}, Lemma \eqref{Lemma2} proved the existence of an equivalent Lagrange multiplier as well as the F-convergence towards it. Moreover, \eqref{alsghqreogqhoo847} and the global dynamics near the curved constraint manifold \eqref{ourConstrainedDynamics} is linked via a coordinate transformation $q^\omega \mapsto A_{q_0}(q^\omega-q_0)$, which, naturally, is slowly varying as $q_0$ changes. Since averaging via F-convergence (Theorem 1.2 in \cite{FLAVOR10}) still works if the slow and fast variables are images of the original variable under a slowly varying diffeomorphism, the global dynamics \eqref{ourConstrainedDynamics} is F-convergent to a solution of \eqref{DAE2}. Notice that $g(q(t))=0$ in \eqref{DAE2} is automatically satisfied, because $\lim_{\omega\rightarrow\infty}g(q^\omega(t))=0$.

Finally, the solution to \eqref{DAE2} is also the solution to \eqref{DAE}. This is by the existence and uniqueness of the solution to differential algebraic equations with initial conditions.

\noindent
(Only main lines of the proof are provided; details are similar to analysis in \cite{FLAVOR10, SIM2}).
$\blacksquare$

\begin{Remark}
    The above proofs show that $g(q^\omega(s))$ is not only $\mathcal{O}(\omega^{-1})$ but $\mathcal{O}(\omega^{-2})$.
    \label{p8ohrfqilbvlorgbgouob14}
\end{Remark}

\end{document}